\documentclass[12pt,final]{amsart}
\usepackage{amsmath,amsthm,amssymb}
\usepackage{amsfonts}
\usepackage[varg]{txfonts}
\usepackage[mathscr]{eucal}
\usepackage{bxpapersize}
\usepackage{ascmac}
\usepackage{epic, eepic}
\usepackage{graphicx}
\usepackage{indentfirst}
\usepackage{cases}
\usepackage{color}
\usepackage{url}
\usepackage{bm}
\usepackage[all]{xy}
\usepackage[top=25truemm,bottom=25truemm,left=30truemm,right=30truemm]{geometry}
\usepackage[noadjust]{cite}
\usepackage{showkeys}
\usepackage{enumerate}
\usepackage{hyperref}
\hypersetup{
setpagesize=false,
 bookmarksnumbered=true,%
 bookmarksopen=true,%
 colorlinks=true,%
 linkcolor=blue,
 citecolor=blue,
 urlcolor=blue,
}

\theoremstyle{plane}
\newtheorem{introthm}{Theorem}
\newtheorem{thm}{Theorem}[section]
\newtheorem{prop}[thm]{Proposition}
\newtheorem{lem}[thm]{Lemma}
\newtheorem{cor}[thm]{Corollary}
\newtheorem{fact}[thm]{Fact}
\theoremstyle{definition}
\newtheorem{dfn}[thm]{Definition}
\newtheorem{ex}[thm]{Example}

\theoremstyle{remark}
\newtheorem{rem}[thm]{Remark}
 \newtheorem*{acknowledgements}{Acknowledgements}

\newcommand{\rank}{\operatorname{rank}}

\newcommand{\im}{\operatorname{Im}}

\newcommand{\sgn}{\operatorname{sgn}}

\newcommand{\0}{\bm{0}}
\newcommand{\R}{\bm{R}}

\newcommand{\Sig}{\Sigma}
\renewcommand{\phi}{\varphi}
\newcommand{\eps}{\varepsilon}

\newcommand{\wtil}{\widetilde}
\newcommand{\inner}[2]{\left\langle{#1},{#2}\right\rangle}

\numberwithin{equation}{section}

\begin{document}

\title[Gaussian curvature and Gauss map]
{On Gaussian curvatures and singularities of Gauss maps of cuspidal edges}
\author[K. Teramoto]{Keisuke Teramoto}
\thanks{This work was partially supported by JSPS KAKENHI Grant Number 19K14533.}
\address{Institute of Mathematics for Industry, 
Kyushu University, 
744 Motooka, Fukuoka 819-0395, Japan}
\email{k-teramoto@imi.kyushu-u.ac.jp}
\subjclass[2010]{57R45, 53A05, 53A55}
\keywords{cuspidal edge, Gauss map, cusp, Gaussian curvature}

\date{\today}

\maketitle

\begin{abstract}
We show relation between sign of Gaussian curvature of cuspidal edge and geometric invariants through 
types of singularities of Gauss map. 
Moreover, we define and characterize positivity/negativity of cusps of Gauss maps by geometric invariants of cuspidal edges, 
and show relation between sign of cusps and of the Gaussian curvature. 
\end{abstract}

\section{Introduction} 
Let $f\colon\Sig\to\R^3$ be a $C^\infty$ map, where $\R^3$ is the Euclidean $3$-space and $\Sig$ is a domain of $\R^2$. 
Then a point $p\in\Sig$ is said to be a {\it singular point} of $f$ if $\rank df_p<2$ holds. 
We denote by $S(f)=\{q\in \Sig\ |\ \rank df_q<2\}$ the set of singular points of $f$. 
A singular point $p\in S(f)$ of $f$ is a {\it cuspidal edge} 
if there exist local diffeomorphisms $\phi\colon\Sig\to\R^2$ on the source and 
$\Phi\colon\R^3\to\R^3$ on the target such that $\Phi\circ f\circ\phi^{-1}(u,v)=(u,v^2,v^3)$, 
where $u,v$ are coordinates of $\R^2$, namely, $f$ is {$\mathcal{A}$-equivalent} to the germ $(u,v)\mapsto(u,v^2,v^3)$ at $\0$. 
(In general, two map germs $f,g\colon(\R^n,\0)\to(\R^m,\0)$ are {\it$\mathcal{A}$-equivalent} if there exist diffeomorphism germs 
$\phi\colon(\R^n,\0)\to(\R^n,\0)$ on the source and $\Phi\colon(\R^m,\0)\to(\R^m,\0)$ on the target such that $\Phi\circ f\circ\phi^{-1}=g$ holds.) 

If $f$ at $p$ is a cuspidal edge, then $\rank df_p=1$ holds, that is, $p$ is a {\it $($co$)$rank one singularity} of $f$. 
It is known that a cuspidal edge is a fundamental singularity of a {front} in $3$-space (see \cite{agv,ifrt}). 
Here, a $C^\infty$ map $f\colon\Sig\to\R^3$ is said to be a {\it front} 
if there exists a $C^\infty$ map $\nu\colon\Sig\to S^2$ such that 
\begin{itemize}
\item $\inner{df_q(X)}{\nu(q)}=0$ for any $q\in \Sig$ and $X\in T_q\R^2$\quad (orthogonality condition), 
\item $(f,\nu)\colon \Sig\to\R^3\times S^2$ gives an immersion\quad (immersion condition),
\end{itemize}
where $S^2$ is the unit sphere in $\R^3$ and $\inner{\cdot}{\cdot}$ is the canonical inner product of $\R^3$. 
We call $\nu$ the {\it Gauss map} of $f$. 
By definition, fronts admit certain singularities and the Gauss map even at singular points, and hence they might be considered as a generalization of immersions. 
There are several studies of surfaces with singularities such as fronts (or {\it frontals} which satisfy the above orthogonality condition) from the differential geometric viewpoint 
(cf. \cite{fsuy,fukutaka,hhnsuy,hks,hnsuy,hnuy,im,is,istaka,istake2,krsuy,ms,mst,msuy,mu,st,ot,suy,suy2,suy1,suy3,t1,t2,t3,t4}). 

We assume that $f$ at $p$ is a cuspidal edge in the following.  
Then there exist a neighborhood $U$ of $p$ and a regular $C^\infty$ curve $\gamma=\gamma(t)\colon(-\eps,\eps)\to U$ with $\gamma(0)=p$ 
such that $\im(\gamma)=S(f)\cap U$, 
where $\im(\gamma)$ is the image of $\gamma$. 
We remark that $\gamma$ consists of corank one singularities of $f$. 
Since $\rank df_p=1$, there exists a non-zero vector field $\eta$ on $U$ such that 
$df_q(\eta_q)=\0$ for any $q\in S(f)\cap U$. 
We call $\gamma$ and $\eta$ a {\it singular curve} and a {\it null vector field}, respectively (cf. \cite{krsuy,suy,suy1}). 

We set two functions $\lambda,\Lambda\colon U\to\R$ on $U$ by 
\begin{equation}\label{eq:lambdas}
\lambda(u,v)=\det(f_u,f_v,\nu)(u,v),\quad \Lambda(u,v)=\det(\nu_u,\nu_v,\nu)(u,v)
\end{equation}
for some coordinates $(u,v)$ on $U$, where $(\ )_u=\partial/\partial u$ and $(\ )_v=\partial/\partial v$. 
We call $\lambda$ and $\Lambda$ the {\it signed area density function} of $f$ and the {\it discriminant function} of $\nu$, respectively (cf. \cite{suy,suy1}). 
By definition, $S(f)=\lambda^{-1}(0)$ holds, in particular $\lambda(\gamma(t))=0$. 
The following useful criterion for a cuspidal edge using $\eta$ and $\lambda$ is known (\cite{krsuy,suy1}). 
\begin{fact}\label{fact:crit-ce}
Let $f\colon\Sig\to\R^3$ be a front and $p\in S(f)$ a corank one singular point of $f$. 
Then $p$ is a cuspidal edge of $f$ if and only if $\eta\lambda(p)\neq0$, 
where $\eta\lambda$ is a directional derivative of $\lambda$ in the direction $\eta$. 
\end{fact}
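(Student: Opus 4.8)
The plan is to translate the directional-derivative condition $\eta\lambda(p)\neq 0$ into a linear-algebraic condition on the $2$-jet of $f$ in the null direction, and then to recognise the cuspidal edge from its $3$-jet. First I would straighten the null vector field and the singular set: since $\eta\lambda(p)=d\lambda_p(\eta(p))\neq0$ forces $d\lambda_p\neq0$, the singular set $S(f)=\lambda^{-1}(0)$ is a regular curve near $p$ by the implicit function theorem, so I may choose coordinates $(u,v)$ centred at $p$ with $S(f)=\{v=0\}$ and $\eta=\partial_v$ along $\{v=0\}$, whence $f_v(u,0)=\0$. Since $\eta=\partial_v$ along $\{v=0\}$ we have $\eta\lambda(p)=\lambda_v(p)$, and expanding $\lambda_v=\det(f_{uv},f_v,\nu)+\det(f_u,f_{vv},\nu)+\det(f_u,f_v,\nu_v)$ and using $f_v(p)=\0$ leaves $\eta\lambda(p)=\det(f_u,f_{vv},\nu)(p)$. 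Differentiating the orthogonality relation $\inner{f_v}{\nu}=0$ gives $\inner{f_{vv}(p)}{\nu(p)}=0$, so both $f_u(p)$ and $f_{vv}(p)$ lie in $\nu(p)^\perp$; hence $\eta\lambda(p)\neq0$ if and only if $f_u(p)$ and $f_{vv}(p)$ are linearly independent. I would also record that this independence is invariant under $\mathcal A$-equivalence preserving $p$ and the null direction: a target diffeomorphism acts on $f_u(p),f_{vv}(p)$ by the isomorphism $d\Phi$, while a source diffeomorphism changes $f_{vv}(p)$ only modulo $f_u(p)$, so whether $f_{vv}(p)\in\langle f_u(p)\rangle$ is well defined.

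With this reformulation the two implications split cleanly. For the forward direction, the model $(u,v)\mapsto(u,v^2,v^3)$ is already in adapted form, and a direct computation gives $\lambda=v\sqrt{9v^2+4}$ and $\eta\lambda(0)=2\neq0$, equivalently that $f_u(0)=(1,0,0)$ and $f_{vv}(0)=(0,2,0)$ are independent; by the invariance just noted, any $f$ that is $\mathcal A$-equivalent to the model inherits the independence of $f_u(p),f_{vv}(p)$, whence $\eta\lambda(p)\neq0$. For the converse I would use the front hypothesis. Writing the $3$-jet of $f$ in the adapted coordinates as $f(u,v)=\phi_0(u)+\tfrac12\phi_2(u)v^2+\tfrac16\phi_3(u)v^3+O(v^4)$, where $\phi_0(u)=f(u,0)$, $\phi_2=f_{vv}(u,0)$, $\phi_3=f_{vvv}(u,0)$ (using $f_v(u,0)=\0$), the assumption $\eta\lambda(p)\neq0$ says $\phi_0'(0),\phi_2(0)$ are independent. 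Normalising the target frame by the canonical basis $e_1,e_2,e_3$ so that $\phi_0'(0)=e_1$, $\phi_2(0)=2e_2$, $\nu(0)=e_3$ gives $f_u\times f_v=v\bigl(\phi_0'\times\phi_2+\tfrac12 v\,\phi_0'\times\phi_3+O(v^2)\bigr)$; computing the unit normal from this and differentiating, I get $\nu_v(0)=\bigl(0,-\tfrac14\inner{\phi_3(0)}{\nu(0)},0\bigr)$, so the immersion (front) condition $\nu_v(0)\neq\0$ is exactly $\inner{\phi_3(0)}{\nu(0)}\neq0$, i.e. $\phi_3(0)\notin\nu(0)^\perp=\langle\phi_0'(0),\phi_2(0)\rangle$. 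Thus the front condition upgrades the two independent vectors to three.

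Finally, given that $\phi_0'(0),\phi_2(0),\phi_3(0)$ are linearly independent I would bring $f$ to the normal form $(u,v^2,v^3)$: after the linear target normalisation, use the implicit function theorem to set the first component equal to $u$, remove the $u$-dependence of $f(u,0)$ by target shears, apply the Morse lemma with parameter to turn the second component into $v^2$, and use the remaining independence of $\phi_3(0)$ together with a division/finite-determinacy argument to reduce the third component to $v^3$. I expect the main obstacle to be precisely this last reduction: the second and third components cannot be normalised by independent coordinate rescalings, so the passage to $(u,v^2,v^3)$ must invoke the finite ($3$-)determinacy of the cuspidal edge rather than naive scalings. The conceptual heart of the argument is the observation in the previous paragraph that the front (immersion) hypothesis is exactly what forces the cubic term $\phi_3(0)$ to be transverse to the limiting tangent plane $\nu(0)^\perp$; without it the same condition $\eta\lambda(p)\neq0$ may hold for a frontal that is not a cuspidal edge (for instance $(u,v^2,v^4)$), so it is the interplay of $\eta\lambda(p)\neq0$ with the front condition, rather than either alone, that pins down the singularity type.
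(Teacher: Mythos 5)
The paper does not prove this statement at all; it is quoted as a known Fact from \cite{krsuy,suy1}, so your attempt has to be measured against the arguments in those sources. Most of your outline is sound and runs parallel to them: the model computation $\lambda=v\sqrt{9v^2+4}$, the reformulation of $\eta\lambda(p)\neq0$ as linear independence of $f_u(p)$ and $f_{vv}(p)$ inside $\nu(p)^\perp$, the $\mathcal{A}$-invariance of that condition (which settles the ``cuspidal edge $\Rightarrow\eta\lambda(p)\neq0$'' direction), and the computation of $\nu_v(p)$ showing that the front condition at $p$ is exactly $\inner{f_{vvv}(p)}{\nu(p)}\neq0$, i.e.\ linear independence of $f_u(p)$, $f_{vv}(p)$, $f_{vvv}(p)$, are all correct.

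The genuine gap is the final reduction, and the tool you invoke there does not exist: the germ $(u,v)\mapsto(u,v^2,v^3)$ is \emph{not} finitely $\mathcal{A}$-determined --- not even determined by its infinite jet. It is unstable along the whole $u$-axis (cuspidal edge points are not stable map germs from a surface to $\R^3$), so the Mather--Gaffney criterion fails; concretely, if $\epsilon(u)$ is a flat function with $\epsilon(u)>0$ for $u\neq0$, then $g(u,v)=(u,v^2,v^3+v\epsilon(u))$ has the same infinite Taylor jet at $\0$ as the cuspidal edge and satisfies all of your pointwise conditions ($g_u$, $g_{vv}$, $g_{vvv}$ independent at $\0$), yet $S(g)=\{\0\}$ is a single point, so $g$ cannot be $\mathcal{A}$-equivalent to the cuspidal edge, whose singular set is a curve. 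Hence no argument using only a jet of $f$ at $p$ can close the proof: linear independence of $\phi_0'(0),\phi_2(0),\phi_3(0)$ does not characterize the cuspidal edge. What rescues the argument is a datum your adapted coordinates already provide but which your last step discards: $f_v(u,0)\equiv\0$ for \emph{all} $u$ near $0$. With it the reduction is elementary and needs no determinacy. After your normalizations $f=(u,v^2,b(u,v))$; write $b(u,v)=b_0(u,v^2)+vb_1(u,v^2)$ (Whitney's even/odd division lemma) and kill $b_0$ by the target shear $(X,Y,Z)\mapsto(X,Y,Z-b_0(X,Y))$. Then $b_1(u,0)=b_v(u,0)\equiv0$ forces $b_1(u,w)=wb_2(u,w)$, so the last component is $v^3b_2(u,v^2)$, and $6\,b_2(0,0)=b_{vvv}(0,0)\neq0$ by your front-condition computation. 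Finally the source change $\tilde v=v\,b_2(u,v^2)^{1/3}$ turns the last component into $\tilde v^3$, and the resulting distortion of the middle component is repaired by the target diffeomorphism $(X,Y,Z)\mapsto\bigl(X,\,Y\,b_2(X,Y)^{2/3},\,Z\bigr)$, giving $(u,\tilde v^2,\tilde v^3)$. This division-along-the-singular-curve argument is essentially what \cite{krsuy,suy1} do; your own example $(u,v^2,v^4)$ shows the front condition is indispensable, and the flat example above shows the curve-wise vanishing of $f_v$, rather than any finite-determinacy principle, is the other indispensable ingredient.
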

We remark that useful criteria for other corank one singularities of fronts and frontals are known (cf. \cite{fsuy,hks,is,istaka,krsuy,suy1}). 

Using $\lambda$ and $\Lambda$, the Gaussian curvature $K$ of $f$ is given as $K=\Lambda/\lambda$ on $U\setminus S(f)$ by the Weingarten formula. 
In general, $K$ is unbounded near $S(f)$ since $\lambda=0$ on $S(f)$. 
However, using a geometric invariants $\kappa_\nu$ called the {\it limiting normal curvature} (\cite{msuy,suy}), the following assertion holds. 
\begin{fact}[\cite{msuy,suy1,suy3}]\label{fact:Gauss-kn}
The Gaussian curvature $K$ is bounded near a cuspidal edge $p\in S(f)\cap U$ 
if and only if $\kappa_\nu$ vanishes along $\gamma$ $($see Figure \ref{fig:kappan}$)$. 
\end{fact}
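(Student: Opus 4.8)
The plan is to reduce the boundedness of $K=\Lambda/\lambda$ to a pointwise vanishing condition on $\Lambda$ along $\gamma$, and then to compute $\Lambda|_\gamma$ in geometrically adapted coordinates, where it turns out to be a nonzero multiple of the quantity defining $\kappa_\nu$.

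First I would fix coordinates $(u,v)$ centered at $p$ so that $S(f)=\{v=0\}$, the singular curve is $\gamma(u)=(u,0)$, and $\eta=\partial_v$ along $\gamma$; then $f_v=\0$ on $\{v=0\}$, and differentiating this identity in $u$ also gives $f_{uv}=\0$ there. A direct expansion of $\lambda=\det(f_u,f_v,\nu)$ shows $\lambda_v(u,0)=\det(f_u,f_{vv},\nu)(u,0)$, which equals $\eta\lambda$ and is nonzero by Fact~\ref{fact:crit-ce}; hence $\lambda$ vanishes to exactly first order in $v$ and $\{f_u,f_{vv},\nu\}$ is a basis of $\R^3$ at each point of $\gamma$. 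Writing $\lambda(u,v)=v\,h(u,v)$ with $h(u,0)\neq0$ and Taylor-expanding $\Lambda$ in $v$, the quotient $\Lambda/\lambda$ stays bounded as $v\to0$ (for each fixed $u$) if and only if $\Lambda(u,0)=0$. Thus $K$ is bounded near $p$ if and only if $\Lambda$ vanishes identically along $\gamma$ near $p$.

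Next I would compute $\Lambda(u,0)=\det(\nu_u,\nu_v,\nu)(u,0)$. Since $\inner{\nu}{\nu}=1$, both $\nu_u$ and $\nu_v$ are orthogonal to $\nu$, so along $\gamma$ they lie in $\operatorname{span}\{f_u,f_{vv}\}$; writing $\nu_u=a f_u+b f_{vv}$ and $\nu_v=c f_u+d f_{vv}$ gives $\Lambda(u,0)=(ad-bc)\det(f_u,f_{vv},\nu)$. The coefficients are pinned down by the Weingarten-type relations obtained from differentiating $\inner{\nu}{f_u}=0$ and $\inner{\nu}{f_v}=0$ and using $f_v=f_{uv}=\0$ on $\gamma$; in particular $\inner{\nu_u}{f_u}=-\inner{\nu}{f_{uu}}=:-L$, while differentiating $\inner{\nu}{f_v}=0$ twice in $v$ and setting $v=0$ yields $\inner{\nu_v}{f_{vv}}=-\tfrac12\inner{\nu}{f_{vvv}}$. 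Solving the resulting $2\times2$ systems and simplifying, the mixed minor collapses to $ad-bc=\tfrac12\inner{\nu}{f_{vvv}}\,L/(EG-F^2)$, where $E,F,G$ are the Gram coefficients of $f_u,f_{vv}$; hence $\Lambda(u,0)$ is a nonzero multiple of $\inner{\nu}{f_{vvv}}\cdot L$.

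Finally I would match this with the geometry: the limiting normal curvature is $\kappa_\nu=\inner{f_{uu}}{\nu}/\inner{f_u}{f_u}=L/E$ along $\gamma$, so $\kappa_\nu=0$ if and only if $L=0$. It remains to rule out the spurious factor, i.e. to see that $\inner{\nu}{f_{vvv}}$ never vanishes, and this is exactly where the front (immersion) condition enters: along $\gamma$ one computes $\nu_v=\tfrac12\inner{\nu}{f_{vvv}}\,(F f_u-E f_{vv})/(EG-F^2)$, and since $f_v=\0$ there, the immersion of $(f,\nu)$ forces $\nu_v\neq\0$, whence $\inner{\nu}{f_{vvv}}\neq0$. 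Therefore $\Lambda(u,0)=0$ if and only if $L=0$, i.e. if and only if $\kappa_\nu=0$, which combined with the first reduction proves the claim. I expect the main obstacle to be partly the bookkeeping in the Weingarten computation, but chiefly the conceptual point that the extra factor $\inner{\nu}{f_{vvv}}$ is controlled and shown nonzero by the \emph{immersion} condition on $(f,\nu)$ rather than by the cuspidal-edge condition alone.
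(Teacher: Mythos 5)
Your proof is correct; note that the paper itself gives no proof of this Fact, quoting it from \cite{msuy,suy1,suy3}, so there is no internal argument to compare against. Your route is essentially the standard one behind those references and behind the machinery the paper sets up in Section 2: along the $u$-axis your frame $\{f_u,f_{vv},\nu\}$ coincides with the paper's frame $\{f_u,h,\nu\}$ (since $f_v=vh$ forces $f_{vv}=h$ at $v=0$), the factor $\inner{\nu}{f_{vvv}}$ whose nonvanishing you must establish is a nonzero multiple of $\wtil{N}$ (equivalently of $\kappa_c$), and deducing it from the immersion condition on $(f,\nu)$ is exactly how the cited literature proves $\kappa_c\neq0$ at cuspidal edges, while $\inner{f_{uu}}{\nu}=|f_u|^2\kappa_\nu$ closes the equivalence.
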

\begin{figure}[htbp]
  \begin{center}
    \begin{tabular}{c}

      \begin{minipage}{0.33\hsize}
        \begin{center}
          \includegraphics[width=3cm]{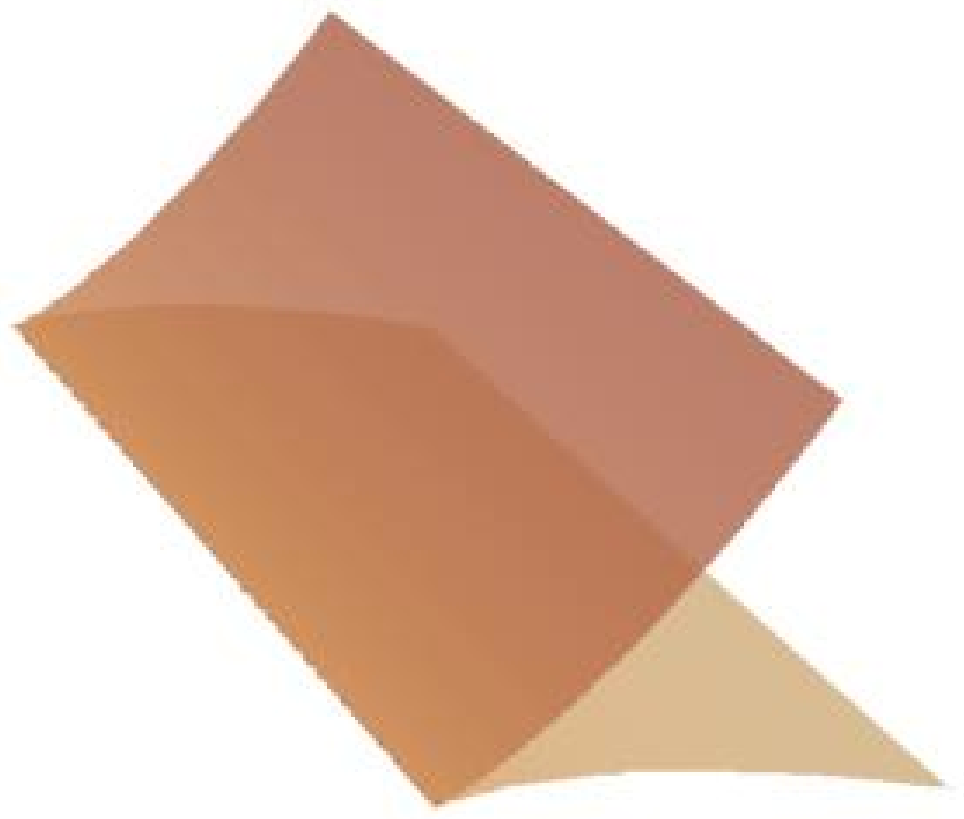}
        \end{center}
      \end{minipage}

      \begin{minipage}{0.33\hsize}
        \begin{center}
          \includegraphics[width=3cm]{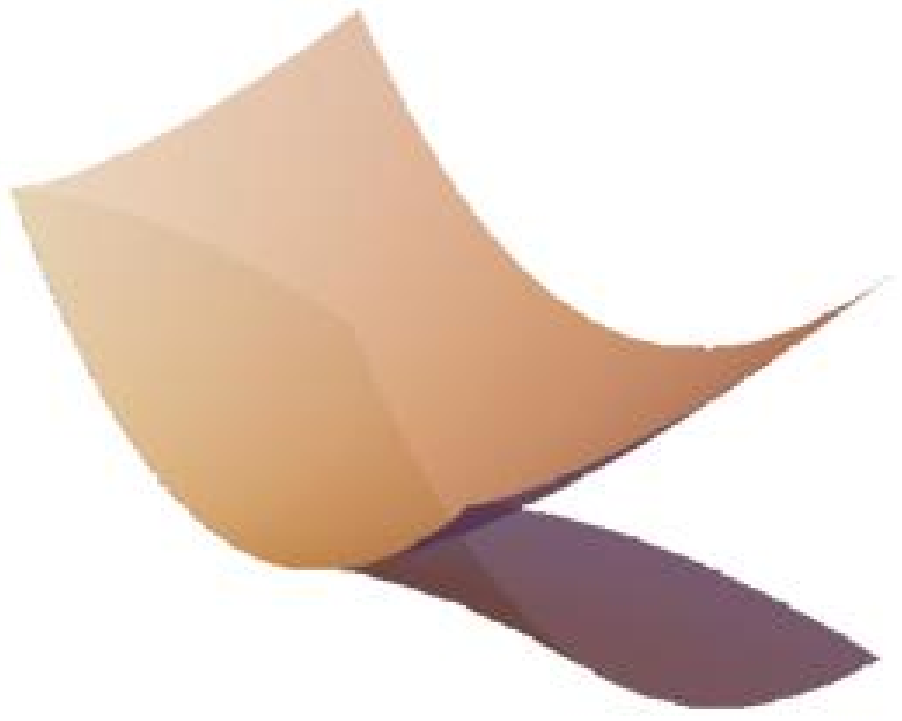}
        \end{center}
      \end{minipage}

    \end{tabular}
    \caption{Cuspidal edges with vanishing $\kappa_\nu$ (left) and non-vanishing $\kappa_\nu$ (right).}
    \label{fig:kappan}
  \end{center}
\end{figure}
On the other hand, the mean curvature $H$ of a front diverges near a corank one singular point (\cite{mst,msuy,suy}). 
We note that if the Gaussian curvature $K$ of a front $f$ is bounded near a cuspidal edge $p$, 
$\Lambda(\gamma(t))=0$ holds, 
where $\gamma$ is a singular curve of $f$ through $p$ and $\Lambda$ is the function as in \eqref{eq:lambdas}. 
This implies that $\im(\gamma)$ is a subset of the singular set $S(\nu)=\Lambda^{-1}(0)$ of $\nu$. 
Thus the Gauss map $\nu$ has singularities in such cases. 
It is known that a {\it fold} and a {\it cusp} 
naturally appear as  singularities of the Gauss map (cf. \cite{bgm,bl,w}). 
If $\nu$ at $p$ is a fold or a cusp, then there exists a $C^\infty$ regular curve $\sigma(\tau)$ ($\tau<|\delta|$) such that 
$\sigma$ parametrizes $S(\nu)$ near $p$ in general (see \cite{ifrt,w}). 
We call $\sigma$ a {\it parabolic curve} of $f$, and 
such singular points are called {\it non-degenerate singular points} of $\nu$.

In the case that the Gaussian curvature $K$ is bounded, the following assertion about shapes of a cuspidal edge is known. 
\begin{fact}[{\cite[Theorem 3.1]{suy}}]\label{fact:Gauss-ks}
Let $f\colon\Sig\to\R^3$ be a front with a cuspidal edge $p$. 
Suppose that the Gaussian curvature $K$ of $f$ is bounded sufficiently small neighborhood $U$ of $p$. 
If $K$ is positive $($resp. non-negative$)$ on $U$, 
then the singular curvature $\kappa_s$ is negative $($resp. non-positive$)$ at $p$ $($\/see Figure \ref{fig:kappas}\/$)$.
\end{fact}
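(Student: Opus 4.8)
The plan is to reduce the statement to a single pointwise identity for the continuously extended Gaussian curvature at $p$ and then read off the sign. Since $K$ is assumed bounded near $p$, it extends continuously across the singular curve, and I read the hypothesis ``$K$ positive (resp. non-negative) on $U$'' as including the limiting value $\hat K(p):=\lim_{q\to p}K(q)$. First I would fix an adapted coordinate system $(u,v)$ centered at $p$ in which $S(f)=\{v=0\}$ is parametrized by arc length, $\eta=\partial_v$ is the null direction, and hence $f_v(u,0)=\0$. Writing $K=\Lambda/\lambda$ with $\lambda,\Lambda$ as in \eqref{eq:lambdas}, one has $\lambda(u,0)=0$, and since $K$ is bounded while $\lambda$ vanishes, necessarily $\Lambda(u,0)=0$ as well. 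Because $\lambda_v(u,0)=\inner{f_u\times f_{vv}}{\nu}=:q\neq0$ (nonzero by the cuspidal edge criterion of Fact \ref{fact:crit-ce}, with $\sgn q=\sgn(\eta\lambda)$), l'Hospital's rule in $v$ yields $\hat K(u)=\Lambda_v(u,0)/\lambda_v(u,0)$, reducing everything to first $v$-derivatives along $\gamma$.

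Next I would set up the orthonormal frame $\{f_u,\bm{b},\nu\}$ along $v=0$ with $\bm{b}=\nu\times f_u$, and record the basic vanishings $f_{uv}(u,0)=\0$ and $\inner{\nu}{f_{vv}}(u,0)=0$, so that on $\gamma$ one has $\kappa_\nu=\inner{f_{uu}}{\nu}$ and $\kappa_s=\sgn(q)\,\inner{f_{uu}}{\bm{b}}$. By Fact \ref{fact:Gauss-kn}, boundedness of $K$ forces $\kappa_\nu\equiv0$ near $p$; this is the crucial simplification, since it makes both $\nu_u$ and $\nu_v$ proportional to $\bm{b}$ at $v=0$. Differentiating $\Lambda=\det(\nu_u,\nu_v,\nu)$ once in $v$, substituting these proportionalities, and expressing the coefficients through the second fundamental form via $\inner{\nu_u}{f_u}=-\inner{\nu}{f_{uu}}$ and its relatives, I expect the computation to collapse to an identity of the shape
\[
\hat K(p)=-A\,\kappa_s(p)-B,\qquad A=\frac{N_1^{\,2}}{|q|^3}>0,\quad B=\frac{M_1^{\,2}}{q^2}\ge0,
\]
where $M_1,N_1$ are the $v$-derivatives at $v=0$ of $\inner{f_{uv}}{\nu}$ and $\inner{f_{vv}}{\nu}$. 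The strict positivity $A>0$ is precisely the statement that $p$ is a genuine cuspidal edge, namely $N_1=\tfrac12\inner{f_{vvv}}{\nu}\neq0$, equivalent to non-vanishing of the cuspidal curvature.

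Granting this identity the conclusion is immediate: if $K$ is non-negative on $U$ then $\hat K(p)\ge0$, whence $A\kappa_s(p)\le -B\le0$ and $\kappa_s(p)\le0$; if $K$ is positive then $\hat K(p)>0$, whence $A\kappa_s(p)<-B\le0$ and $\kappa_s(p)<0$. The main obstacle is the middle step: carrying out the frame differentiation of $\Lambda$ cleanly and, above all, pinning down the orientation and sign conventions so that the coefficient of $\kappa_s$ emerges negative and $A$ emerges positive; everything hinges on the identity $\lambda_v(u,0)=q$ together with $N_1\neq0$. As a consistency check I would also verify that the $A,B$ produced this way agree with those coming from the alternative expansion $\hat K=(L_vN_v-M_v^{\,2})/W$ of $K=(LN-M^2)/(EG-F^2)$, where $L,M,N$ are the coefficients of the second fundamental form and $W>0$ is the leading coefficient of $EG-F^2$.
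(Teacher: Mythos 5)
Your proposal is correct in substance, but it takes a genuinely different route from the one this paper relies on. Note first that the paper never proves Fact~\ref{fact:Gauss-ks}: it is imported from \cite{suy}. What the paper does prove is the equivalent key identity, namely \eqref{eq:Gaussian-curvature}, $4K(p)=-4\kappa_t(p)^2-\kappa_s(p)\kappa_c(p)^2$, from which the Fact follows by exactly the sign argument you give (since $\kappa_c(p)\neq0$ at a cuspidal edge). Your identity is the same one: in a special adapted coordinate system your $N_1$ equals $\wtil{N}=\kappa_c/2$, your $M_1$ equals $\wtil{M}=\kappa_t$, $|q|=1$, so $A=\kappa_c^2/4$ and $B=\kappa_t^2$; and your general-coordinate constants $A=N_1^2/|q|^3$, $B=M_1^2/q^2$ are in fact correct (I checked the frame computation: with $\kappa_\nu\equiv0$ one gets $\nu_v(u,0)=-(N_1/q)\,\bm{b}$, $f_{uu}=\sgn(q)\kappa_s\bm{b}$, hence $L_v(u,0)=-\kappa_s N_1/|q|$ and $\hat K=(L_vN_v-M_v^2)/q^2$). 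The difference is how the identity is reached. You apply l'Hospital to $K=\Lambda/\lambda$ and differentiate $\Lambda=\det(\nu_u,\nu_v,\nu)$ in an orthonormal frame along the singular curve -- essentially a self-contained reconstruction of the original argument of \cite{suy}. The paper instead factors $K=\kappa\tilde{\kappa}$ through the bounded principal curvature $\kappa$ and the rescaled unbounded one $\hat{\kappa}=\lambda\tilde{\kappa}$, writes $\kappa=v\psi$ by the division lemma, and evaluates via the cited formulas \eqref{eq:kv} (from \cite{t3}) and \eqref{eq:k-hat} (from \cite{t2}). Your route buys elementarity and independence from the principal-curvature machinery; the paper's route is shorter and feeds directly into its analysis of non-degeneracy and of fold/cusp types of the Gauss map (Lemma~\ref{lem:non-degeneracy}, Proposition~\ref{thm:K-nu}, Fact~\ref{fact:singgauss}).

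One caveat you rightly flag, and which should stay explicit: $K$ is defined only on $U\setminus S(f)$, so positivity there yields only $\hat K(p)=\lim_{q\to p}K(q)\geq0$, hence only $\kappa_s(p)\leq0$. The strict conclusion requires the extended value $\hat K(p)>0$; under the literal reading of ``$K>0$ on $U$'' the degenerate configuration $\hat K(p)=0$, $\kappa_s(p)=\kappa_t(p)=0$ is not excluded, which is why \cite{suy} phrases the strict case with $K$ bounded below by a positive constant. So your interpretation of the hypothesis as applying to the continuous extension is not cosmetic but necessary. Finally, your middle step is announced as an expectation rather than executed; as noted above it does close up correctly, so this is a presentational gap rather than a mathematical one.
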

\begin{figure}[htbp]
  \begin{center}
    \begin{tabular}{c}

      \begin{minipage}{0.33\hsize}
        \begin{center}
          \includegraphics[width=3.5cm]{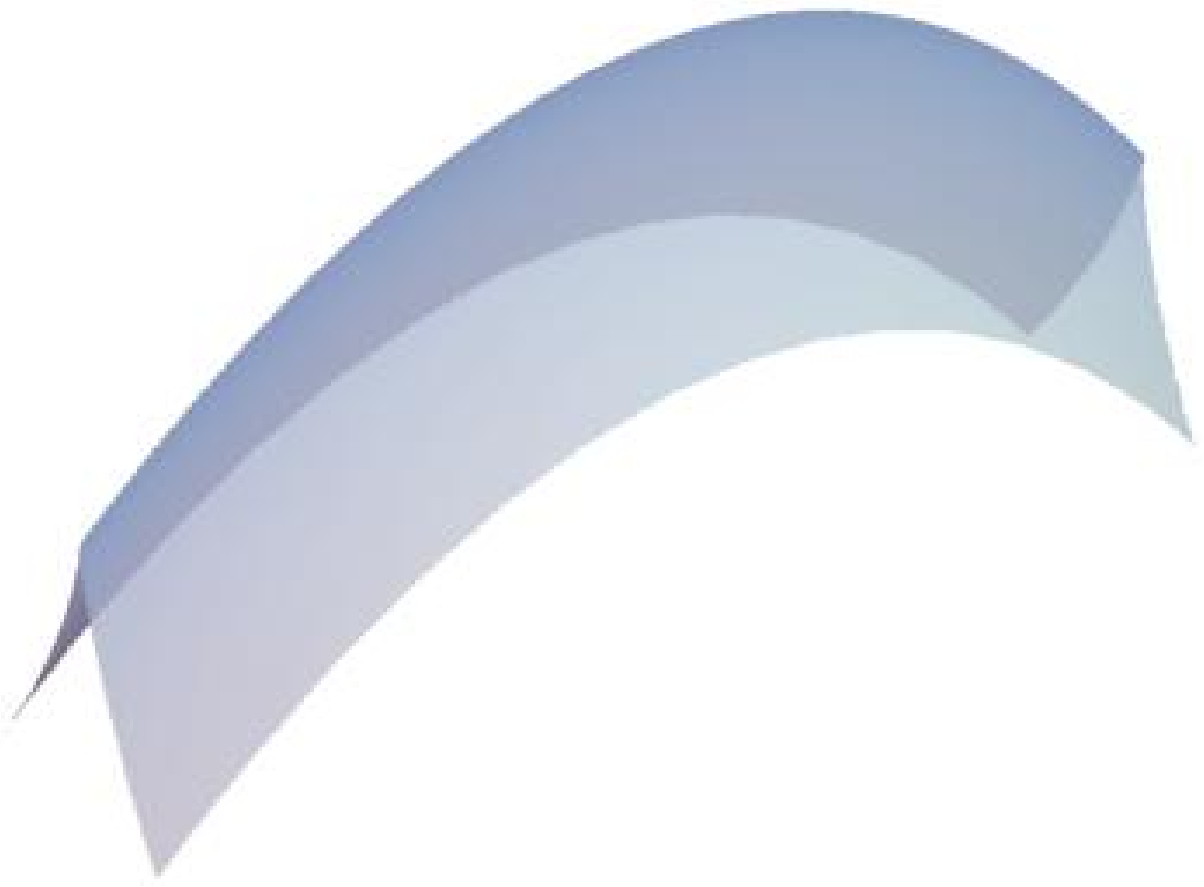}
        \end{center}
      \end{minipage}

      \begin{minipage}{0.33\hsize}
        \begin{center}
          \includegraphics[width=3.5cm]{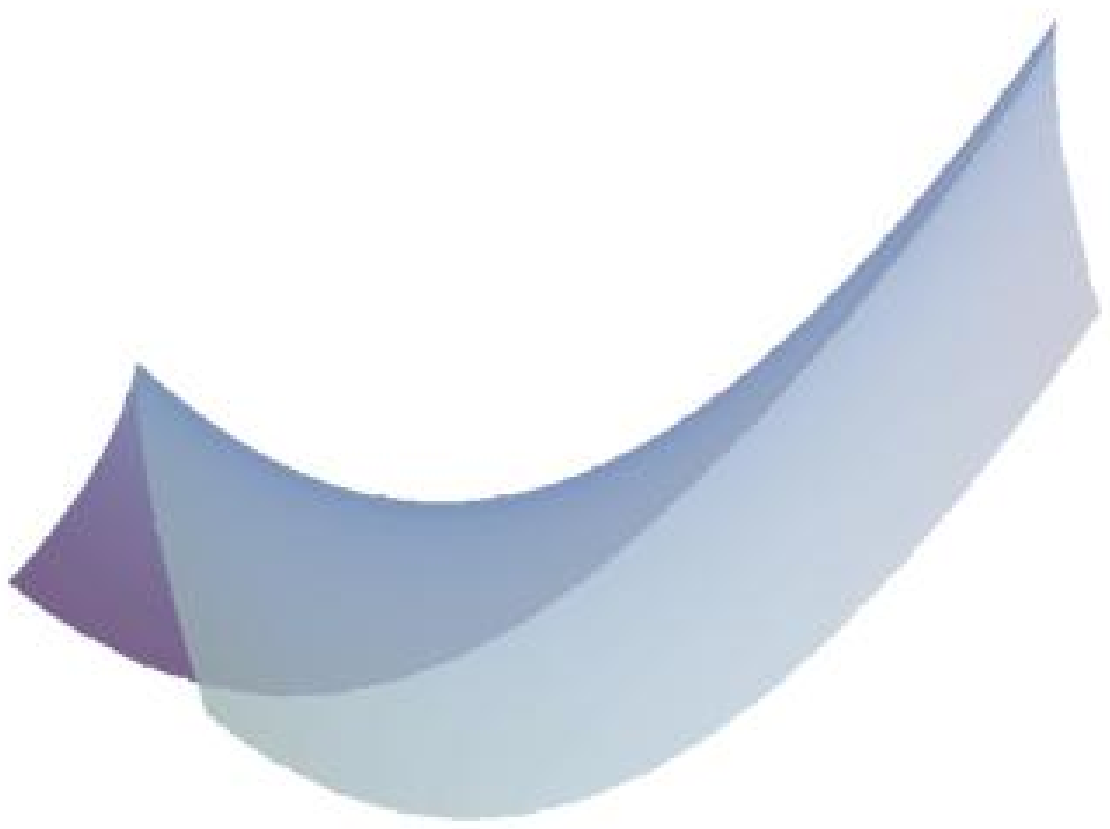}
        \end{center}
      \end{minipage}

    \end{tabular}
    \caption{Cuspidal edges with positive $\kappa_s$ (left) and negative $\kappa_s$ (right).}
    \label{fig:kappas}
  \end{center}
\end{figure}
Here the singular curvature $\kappa_s$ is an {\it intrinsic invariant} of a cuspidal edge (\cite{hhnsuy,hnsuy,msuy,suy}). 
This statement tells us that if $K$ is positive and bounded, then a cuspidal edge is curved concavely. 
We should mention that the inverse of statement as in Fact \ref{fact:Gauss-ks} is {\it not} true in general (cf. \cite{suy}).
Thus it is natural to ask the following question: {\it When does the inverse statement as in Fact \ref{fact:Gauss-ks} hold?}

In this paper, we shall give an answer to this question. 
More precisely, we show the following. 
\begin{introthm}\label{thm:Gauss-ks}
Let $f\colon\Sig\to\R^3$ be a front with a cuspidal edge $p$ and $\nu$ its Gauss map. 
Suppose that the Gaussian curvature $K$ of $f$ is bounded on a sufficiently small neighborhood $U$ of $p$. 
When $p$ is a non-degenerate singular point of $\nu$ but not a fold, 
then $K$ is positive $($\/resp. negative$\/)$ on $U$ if and only if $\kappa_s$ is negative $($\/resp. positive$\/)$ at $p$.
\end{introthm}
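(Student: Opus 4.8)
The plan is to reduce the statement to a single sign identity at $p$. Because $K$ is bounded it extends continuously across $\gamma$, and because (as will be shown) the cusp hypothesis forces $K(p)\neq 0$, the sign of $K$ on a sufficiently small $U$ equals $\sgn K(p)$. Hence it suffices to prove
\[
\sgn K(p)=-\sgn\kappa_s(p),\qquad K(p),\kappa_s(p)\neq 0,
\]
which yields both equivalences ($K>0\iff\kappa_s<0$ and $K<0\iff\kappa_s>0$) at once and refines Fact~\ref{fact:Gauss-ks}. I fix an adapted coordinate $(u,v)$ with singular curve $\gamma(u)=(u,0)$, null field $\eta=\partial_v$, and $\gamma$ parametrized by arc length, and I use the right-handed orthonormal frame $\{\bm e,\bm b,\nu_0\}$ along $\gamma$, where $\bm e=\gamma'$, $\nu_0=\nu\circ\gamma$, and $\bm b=\nu_0\times\bm e$. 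Writing $\lambda=v\hat\lambda$ with $\hat\lambda(u,0)=\lambda_v(u,0)\neq 0$ (Fact~\ref{fact:crit-ce}), orienting $\nu$ so that $\lambda_v>0$, and using $K=\Lambda/\lambda$, the boundedness of $K$ forces $\Lambda(u,0)\equiv 0$, equivalently $\kappa_\nu\equiv 0$ (Fact~\ref{fact:Gauss-kn}); hence $\Lambda=v\hat\Lambda$ and $K(p)=\Lambda_v(p)/\lambda_v(p)$, so everything reduces to determining $\sgn\Lambda_v(p)$.

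Next I expand $\nu=\nu_0+v\,\bm p+\tfrac{v^2}{2}\bm q+O(v^3)$ along $\gamma$. The orthogonality relations $\inner{f_u}{\nu}=\inner{f_v}{\nu}=0$, the normalization $|\nu|=1$, and the front (immersion) condition pin down $\bm p=p_b\,\bm b$ with $p_b\neq 0$ and constrain the components of $\bm q$. Using $\kappa_\nu=0$ one obtains $\nu_u(p)=\nu_0'=\tau\,\bm b$ and $\nu_v(p)=\bm p=p_b\,\bm b$, where $\tau:=\inner{\nu_0'}{\bm b}$ is the geodesic torsion of the singular curve; in particular $d\nu_p$ has corank one with null direction $\eta_\nu\propto p_b\,\partial_u-\tau\,\partial_v$. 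Since $\Lambda\equiv 0$ on $\{v=0\}$ gives $\Lambda_u(p)=0$, the point $p$ is a non-degenerate singular point of $\nu$ (i.e.\ $d\Lambda_p\neq 0$) if and only if $\Lambda_v(p)\neq 0$, i.e.\ $K(p)\neq 0$; in that case the parabolic curve coincides with $\gamma$ near $p$, so by the Whitney criteria $p$ is a fold precisely when $\eta_\nu$ is transverse to $\{v=0\}$, i.e.\ $\tau(p)\neq 0$. Thus ``non-degenerate but not a fold'' translates exactly into $K(p)\neq 0$ and $\tau(p)=0$.

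Finally I expand $\Lambda_v=\det(\nu_{uv},\nu_v,\nu)+\det(\nu_u,\nu_{vv},\nu)$ along $\gamma$ in the frame $\{\bm e,\bm b,\nu_0\}$, using the Frenet-type relations $\bm e'=\kappa_s\bm b$, $\nu_0'=\tau\,\bm b$, and $\bm b'=-\kappa_s\bm e-\tau\,\nu_0$ (valid because $\kappa_\nu=0$, with $\kappa_s=\inner{\gamma''}{\bm b}$ the singular curvature under the chosen orientation). This produces a formula of the shape
\[
\Lambda_v(u,0)=-p_b^{\,2}\,\kappa_s-\tau\,Q,
\]
where $Q=\inner{\bm q}{\bm e}$ is a smooth function. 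The decisive feature is that the second (``error'') term carries the factor $\tau$: it is nonzero at a fold and is exactly what breaks the converse of Fact~\ref{fact:Gauss-ks} in general. Under the cusp condition $\tau(p)=0$ this term drops out, leaving $\Lambda_v(p)=-p_b(p)^2\kappa_s(p)$, whence
\[
K(p)=\frac{\Lambda_v(p)}{\lambda_v(p)}=-\frac{p_b(p)^2}{\lambda_v(p)}\,\kappa_s(p),
\]
so $\sgn K(p)=-\sgn\kappa_s(p)$ and both are nonzero. Together with the continuity argument of the first paragraph this gives the claimed equivalence on $U$.

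I expect the main obstacle to be the determinant bookkeeping for $\Lambda_v$ in the moving frame and, above all, the clean identification of the error term's coefficient as a multiple of the torsion $\tau$, since this is what makes the ``not a fold'' hypothesis ($\tau(p)=0$) annihilate it. A secondary point requiring care is matching the Whitney fold/cusp criteria for the $S^2$-valued map $\nu$ to the conditions $\tau(p)\neq 0$ and $\tau(p)=0$, and verifying that the parabolic curve is $\gamma$ itself in the non-degenerate bounded-curvature case.
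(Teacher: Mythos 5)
Your proposal is correct, and it reaches the paper's conclusion by a genuinely more self-contained route. The paper's own proof is a short assembly of previously established machinery: Proposition~\ref{thm:K-nu} (non-degeneracy of $p$ for $\nu$ is equivalent to $K\neq0$ near $p$), Corollary~\ref{cor:not-fold} (``non-degenerate but not a fold'' is equivalent to $\kappa_t(p)=0$, $\kappa_s(p)\neq0$, which rests on the classification of Gauss-map singularities in Fact~\ref{fact:singgauss}), and the identity $4K(p)=-4\kappa_t(p)^2-\kappa_s(p)\kappa_c(p)^2$ of \eqref{eq:Gaussian-curvature}, whose derivation uses the bounded principal curvature $\kappa$ together with the formulas \eqref{eq:kv} and \eqref{eq:k-hat} imported from earlier papers. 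You re-derive all three ingredients from scratch: the reduction $K(p)=\Lambda_v(p)/\lambda_v(p)$, the moving-frame expansion giving $\Lambda_v(u,0)=-p_b^2\kappa_s-\tau Q$, Whitney's transversality criterion (null direction of $d\nu_p$ versus the parabolic curve, which you correctly identify with $\gamma$) to translate ``not a fold'' into $\tau(p)=0$, and the front condition to guarantee $p_b\neq0$. Your quantities are exactly the paper's invariants in disguise: with the special adapted frame one has $\tau=-\kappa_t$, $p_b=-\kappa_c/2$, and in fact your coefficient $Q=\inner{\bm{q}}{\bm{e}}$ equals $\tau$ itself (one checks $\inner{\nu_{vv}}{f_u}=-\inner{\nu}{h_u}=\inner{\nu_u}{h}=-\kappa_t$ along the singular curve), so your formula is literally \eqref{eq:Gaussian-curvature}; the error term is $-\tau^2$, matching the paper's $-\kappa_t^2$. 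What the paper's route buys is brevity and the explicit invariant formula; what yours buys is independence from the principal-curvature results and from the external classification of fold/cusp in terms of $\kappa_t$, plus a transparent explanation of why the ``not a fold'' hypothesis is precisely what annihilates the term that breaks the converse of Fact~\ref{fact:Gauss-ks}. All the steps I checked (the determinant expansion of $\Lambda_v$, the Frenet-type relations under the normalizations $\kappa_\nu\equiv0$ and $\lambda_v>0$, the equivalence of $d\Lambda_p\neq0$ with the paper's non-degeneracy condition $d\kappa_p\neq0$, and the orientation bookkeeping) are sound.
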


We next focus on the singular locus $\check{\nu}=\nu\circ\sigma$ of the Gauss map $\nu$, where $\sigma$ is a parabolic curve.
The singular locus of a fold is a regular spherical curve 
and of a cusp is a spherical curve with an {\it ordinary cusp singularity} which is $\mathcal{A}$-equivalent to $t\mapsto(t^2,t^3)$. 
For the case of a cusp singularity of the Gauss map, one can define the {\it cuspidal curvature} $\mu^\nu$ 
for $\check{\nu}$ (\cite{suy2,shibaume}). 
Using the cuspidal curvature $\mu^\nu$, we can define {\it positivity} and {\it negativity} for a cusp singularity 
(cf. \eqref{eq:cuspcurvature1} and Definition \ref{def:zigzag-Gauss}). 
If $K$ is non-zero bounded near a cuspidal edge, then one can take $\sigma(t)$ as $\sigma(t)=\gamma(\pm t)$ near a cuspidal edge (see \cite{suy3}). 
In particular, we shall show the following. 
\begin{introthm}\label{cor:sign}
Let $f\colon\Sig\to\R^3$ be a front, $\nu\colon\Sig\to S^2$ the Gauss map of $f$, 
$p\in\Sig$ a cuspidal edge of $f$ and $\gamma(t)$ $(\/t<|\eps|\/)$ 
the singular curve of $f$ passing through $p(=\gamma(0))$. 
Take an orientation of $\gamma$ so that the left-hand side of $\gamma$ is $\lambda>0$ on a neighborhood $U(\subset\Sig)$ of $p$, 
where $\lambda$ is the signed area density function of $f$ as in \eqref{eq:lambdas}. 
Suppose that the Gaussian curvature $K$ of $f$ is non-zero bounded on $U$ and $\nu$ has a cusp at $p$. 
Then $p$ is a positive $($\/resp. negative\/$)$ cusp of the singular locus $\check{\nu}=\nu\circ\sigma$ of $\nu$ 
if and only if $\kappa_s$ is positive $($\/resp. negative$)$ at $p$ when we take 
the parabolic curve $\sigma(t)$ through $p=\sigma(0)$ as $\sigma(t)=\gamma(t)$.
\end{introthm}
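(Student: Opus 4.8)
The plan is to fix an adapted normal form for the cuspidal edge, read off the singular curvature $\kappa_s$, the boundedness and non-vanishing conditions on $K$, and the cuspidal curvature $\mu^\nu$ of the spherical curve $\check\nu=\nu\circ\gamma$ as explicit functions of the normal-form coefficients, and then compare signs. First I would choose coordinates $(u,v)$ on $U$ so that the singular curve is $\gamma(t)=(t,0)$, the null vector field is $\eta=\partial_v$ along $\gamma$, and $f$ is written in the standard isometric normal form of a cuspidal edge; then $f_u$, $f_{uu}$, $\nu$, and hence the functions $\lambda$ and $\Lambda$ of \eqref{eq:lambdas}, all have known Taylor expansions along $\{v=0\}$. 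In these coordinates the orientation hypothesis that $\lambda>0$ on the left of $\gamma$ fixes the sign of $\lambda_v$ along $\gamma$, which is precisely the sign factor entering the definition of $\kappa_s$.

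Next I would impose the standing hypotheses. By Fact \ref{fact:Gauss-kn} boundedness of $K$ is equivalent to $\kappa_\nu\equiv0$ along $\gamma$, and as recalled in the Introduction this forces $\Lambda(\gamma(t))=0$, so that $\im(\gamma)\subset S(\nu)=\Lambda^{-1}(0)$; the extra assumption $K\neq0$ means that $\Lambda$ and $\lambda$ vanish to the same first order along $\gamma$ with non-zero ratio, which is the non-degeneracy under which one may take $\sigma(t)=\gamma(\pm t)$ as a parabolic curve (cf. \cite{suy3}), and here we take $\sigma(t)=\gamma(t)$. Translating the hypothesis that $\nu$ has a cusp but not a fold at $p$ through the standard fold/cusp criteria (cf. \cite{krsuy,suy1}) gives the algebraic condition on the coefficients under which $\check\nu=\nu\circ\gamma$ has an ordinary cusp at $t=0$; I would confirm $\check\nu'(0)=\0$ and $\check\nu''(0)\neq\0$ directly from the expansion of $\nu$.

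With the normal form in hand I would compute the two signs and match them. On one hand, differentiating $\check\nu(t)=\nu(\gamma(t))$ yields $\check\nu''(0)$ and $\check\nu'''(0)$; substituting these into the definition \eqref{eq:cuspcurvature1} gives $\mu^\nu$ as an explicit expression in the leading coefficients, whose sign decides, via Definition \ref{def:zigzag-Gauss}, whether the cusp is positive or negative. On the other hand, the formula for $\kappa_s$ as the signed geodesic curvature of $\hat\gamma=f\circ\gamma$ (carrying the factor $\sgn(\lambda_v)$, cf. \cite{suy}) expands to another explicit expression in the same coefficients. The theorem follows once I verify that, after the boundedness and cusp conditions are imposed, $\sgn(\mu^\nu)=\sgn(\kappa_s)$.

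The main obstacle I anticipate is not the Taylor expansions, which are routine once the normal form is fixed, but the consistent bookkeeping of signs. Four conventions must be reconciled simultaneously: the chosen orientation of $\gamma$ (fixed by $\lambda>0$ on the left), the orientation of the unit normal $\nu$, the specific choice $\sigma(t)=\gamma(t)$ rather than $\gamma(-t)$ (reversing $t$ flips the sign of the cusp, which is exactly why this choice appears in the hypothesis), and the sign factors built into the definitions of $\kappa_s$ and of $\mu^\nu$. The crux is to show that these align so that $\mu^\nu$ and $\kappa_s$ share, rather than oppose, their signs. As a consistency check I would compare with Theorem \ref{thm:Gauss-ks} and Fact \ref{fact:Gauss-ks}: combining the present statement with Theorem \ref{thm:Gauss-ks} predicts that $p$ is a positive cusp exactly when $K<0$, and the limiting implication of Fact \ref{fact:Gauss-ks} (that $K>0$ forces $\kappa_s<0$) must stay compatible with this, which gives a stringent test of the sign computation.
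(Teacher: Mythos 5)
Your proposal is correct in outline and follows the same basic strategy as the paper: under the hypotheses reduce to $\sigma(t)=\gamma(t)$, differentiate $\check{\nu}=\nu\circ\gamma$ three times, substitute into \eqref{eq:cuspcurvature1}, and compare the resulting sign with that of $\kappa_s$ (your consistency check against Theorem \ref{thm:Gauss-ks} is exactly Corollary \ref{cor:bdd-zigzag}). The genuine difference is the computational vehicle. The paper never writes a normal form: in a special adapted coordinate system it uses the orthonormal frame $\{f_u,h,\nu\}$ along the edge (where $f_v=vh$) and two structure equations, namely $\check{\nu}'=\nu_u=-\kappa_t h$ (Lemma \ref{lem:Weingarten}, valid because boundedness of $K$ forces $\kappa_\nu\equiv 0$) and $h_u=-\kappa_s f_u+\kappa_t\nu$ (obtained from $\kappa_s=-\wtil{E}_{vv}/2$ and $\kappa_t=\wtil{M}$ in \eqref{eq:curvature1}); combined with the cusp conditions $\kappa_t(p)=0$, $\kappa_t'(p)\neq 0$ of Fact \ref{fact:singgauss}, these give the closed formula $\mu^\nu=2\kappa_s(p)/\sqrt{|\kappa_t'(p)|}$, from which the sign equivalence and its independence of conventions are immediate. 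Your Taylor-coefficient route would also succeed, but it is more delicate than you indicate at exactly the places you defer: $\check{\nu}'''$ involves the $4$-jet of $f$, so the normal form must be carried to that order; the step "verify $\sgn(\mu^\nu)=\sgn(\kappa_s)$" is the entire content of the theorem rather than a routine check, and the invariance under $\nu\mapsto-\nu$ is not a computation but the observation (Lemma \ref{lem:orientation}) that flipping $\nu$ flips $\lambda$, hence reverses the prescribed orientation of $\gamma$, so the two resulting sign changes in \eqref{eq:cuspcurvature1} cancel; the fold/cusp recognition should be quoted from plane-to-plane criteria (\cite{w,s1}) or in invariant form from Fact \ref{fact:singgauss}, not from the front criteria of \cite{krsuy,suy1}; and $\check{\nu}'(0)=\0$ with $\check{\nu}''(0)\neq\0$ is not the ordinary-cusp condition for the locus --- one needs $\det(\check{\nu},D_t\dot{\check{\nu}},D_tD_t\dot{\check{\nu}})\neq 0$ at $t=0$, which here follows from the hypothesis that $\nu$ has a Whitney cusp at $p$ (equivalently, from $\kappa_s(p)\neq0$ via the closed formula above).
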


\section{Geometric properties of cuspidal edges}
Let $f\colon\Sig\to\R^3$ be a front with a cuspidal edge at $p\in\Sig$, where $\Sig$ is a domain of $\R^2$. 
Then one can take the following local coordinate system. 
\begin{dfn}[\cite{krsuy,suy1,msuy}]\label{def:adapted}
A local coordinate system $(U;u,v)$ centered at a cuspidal edge $p$ 
is {\it adapted} if it is compatible with respect to the orientation of $\Sig$ 
and the following conditions hold:
\begin{itemize}
\item the $u$-axis gives a singular curve, that is, $\gamma(u)=(u,0)$ on $U$,
\item $\eta=\partial_v$ is a null vector field,
\item there are no singular points other than the $u$-axis.
\end{itemize}
Moreover, we call a local coordinate system $(U;u,v)$ a {\it special adapted} 
if it is an adapted coordinate system and the pair $\{f_u,f_{vv},\nu\}$ is an orthonormal frame along the $u$-axis.
\end{dfn}
%
In what follows, we fix an adapted coordinate system $(U;u,v)$ centered at a cuspidal edge $p$ of a front $f$ in $\R^3$. 
On this local coordinate system, we set the following invariants along the $u$-axis:
\begin{align}\label{kappas}
\begin{aligned}
\kappa_s&=\sgn(\lambda_v)\dfrac{\det(f_u,f_{uu},\nu)}{|f_u|^3},\quad
\kappa_\nu=\dfrac{\inner{f_{uu}}{\nu}}{|f_u|^2},\quad
\kappa_c=\dfrac{|f_u|^{3/2}\det(f_u,f_{vv},f_{vvv})}{|f_u\times f_{vv}|^{5/2}},\\
\kappa_t&=\dfrac{\det(f_u,f_{vv},f_{uvv})}{|f_u\times f_{vv}|^2}
-\dfrac{\inner{f_u}{f_{vv}}\det(f_u,f_{vv},f_{uu})}{|f_u|^2|f_u\times f_{vv}|^2}.
\end{aligned}
\end{align}
These invariants $\kappa_s$, $\kappa_\nu$, $\kappa_c$ and $\kappa_t$ 
are called the {\it singular curvature} (\cite{suy}), the {\it limiting normal curvature} (\cite{suy}) 
and the {\it cuspidal curvature} (\cite{msuy}), the {\it cuspidal torsion} (\cite{ms}), respectively. 
We note that $\kappa_c$ does not vanish along the $u$-axis when all singular points consist of cuspidal edges (\cite[Proposition 3.11]{msuy}). 
For $\kappa_t$, the following assertion is known. 
\begin{fact}[\cite{istake2,t2}]\label{fact:curve-line}
Let $f\colon\Sig\to\R^3$ be a front with a cuspidal edge $p$ and $\gamma$ 
a singular curve passing through $p$. 
Then the singular locus $\hat{\gamma}=f\circ\gamma$ $($or the singular curve $\gamma\/)$ 
is a line of curvature of $f$ if and only if $\kappa_t$ vanishes identically 
along the singular curve $\gamma$.
\end{fact}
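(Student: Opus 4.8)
The plan is to translate the line-of-curvature condition for the singular locus $\hat\gamma=f\circ\gamma$ into a single determinant condition and then identify that determinant with $\kappa_t$ up to a nonvanishing factor. First I would fix an adapted coordinate system $(U;u,v)$ centered at $p$ in the sense of Definition \ref{def:adapted}, so that $\gamma(u)=(u,0)$, $\eta=\partial_v$, and $f_v=\0$ along the $u$-axis. Along this axis the triple $\{f_u,f_{vv},\nu\}$ is a frame of $\R^3$; differentiating the orthogonality relation $\inner{f_v}{\nu}=0$ in $v$ and using $f_v=\0$ on the axis shows $\inner{f_{vv}}{\nu}=0$ there, so $f_{vv}$ is tangent to $f$. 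Since $\hat\gamma$ is a regular space curve and $\nu$ restricts along it to a well-defined unit normal (as $f$ is a front), the Rodrigues-type characterization of lines of curvature applies in the form used for fronts in \cite{istake2,t2}: $\hat\gamma$ is a line of curvature of $f$ if and only if the normal derivative $\nu_u$ is parallel to the tangent $\hat\gamma_u=f_u$ along the $u$-axis, where the chain rule gives $(\nu\circ\gamma)'=\nu_u$ because $\gamma'=(1,0)$.

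Next I would exploit that $\nu_u$ is perpendicular to $\nu$ and therefore lies in the tangent span of $\{f_u,f_{vv}\}$; writing $\nu_u=af_u+bf_{vv}$, the parallelism $\nu_u\parallel f_u$ is exactly the condition $b=0$. Extracting the coefficient $b$ by a determinant against the frame shows $b\neq0 \Leftrightarrow \det(f_u,\nu_u,\nu)\neq0$, so the line-of-curvature condition is equivalent to $\det(f_u,\nu_u,\nu)=0$ identically along $\gamma$. To compute this determinant I would use that along the $u$-axis the unit normal equals $\nu=(f_u\times f_{vv})/|f_u\times f_{vv}|$, the limiting normal of the cuspidal edge. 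Setting $W=f_u\times f_{vv}$ and differentiating $\nu=W/|W|$ in $u$, the term proportional to $\nu$ drops out of the determinant, reducing matters to $\det(f_u,\nu_u,\nu)=|f_u\times f_{vv}|^{-2}\det(f_u,W_u,W)$.

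The computational heart is then to expand $W_u=f_{uu}\times f_{vv}+f_u\times f_{uvv}$ and simplify $\det(f_u,W_u,W)$ using the scalar triple product together with the vector identity $a\times(b\times c)=b\inner{a}{c}-c\inner{a}{b}$. Every term containing a repeated vector vanishes, and the surviving contribution is precisely $\inner{f_u}{f_{vv}}\det(f_u,f_{vv},f_{uu})-|f_u|^2\det(f_u,f_{vv},f_{uvv})$. Dividing by $|f_u\times f_{vv}|^2$ and comparing with \eqref{kappas} yields the clean relation $\det(f_u,\nu_u,\nu)=-|f_u|^2\kappa_t$ along the singular curve. Since $f_u\neq\0$ on the $u$-axis, the factor $|f_u|^2$ is nonzero, so $\det(f_u,\nu_u,\nu)$ vanishes identically along $\gamma$ if and only if $\kappa_t$ does; combined with the Rodrigues characterization this gives the stated equivalence. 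The main obstacle I anticipate is purely bookkeeping in the triple-product expansion, namely keeping track of cyclic reorderings and signs so that the surviving terms assemble exactly into the definition of $\kappa_t$ rather than a sign-flipped or rescaled variant.
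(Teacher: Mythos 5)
Your proof is correct, but note that the paper itself does not prove this statement: it is imported as a Fact from \cite{istake2,t2}, so there is no in-paper argument to compare against. The closest the paper comes is Lemma \ref{lem:Weingarten}, which (in a \emph{special} adapted coordinate system) gives $\nu_u=-\kappa_\nu f_u-\kappa_t h$ along the singular curve via the Weingarten-type formula of \cite{t1}; granted the same Rodrigues-type characterization of lines of curvature that you invoke, the Fact then drops out in one line, since $\nu_u\parallel f_u$ exactly when the $h$-component $-\kappa_t$ vanishes. Your route is genuinely different in technique: you stay in a general adapted coordinate system, observe that $\nu_u$ lies in the span of $\{f_u,f_{vv}\}$, reduce the parallelism to the vanishing of $\det(f_u,\nu_u,\nu)$, and identify that determinant with $-|f_u|^2\kappa_t$ by differentiating $\nu=(f_u\times f_{vv})/|f_u\times f_{vv}|$ and expanding with triple-product identities. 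I checked the expansion: the surviving terms are exactly $\inner{f_u}{f_{vv}}\det(f_u,f_{vv},f_{uu})-|f_u|^2\det(f_u,f_{vv},f_{uvv})$, and dividing by $|f_u\times f_{vv}|^2$ does give $-|f_u|^2\kappa_t$ by \eqref{kappas}, so the sign and scaling work out. What your approach buys is self-containedness (no appeal to \cite[Lemma 2.1]{t1} or to the special adapted normalization) and an explicit scalar identity valid along the whole singular curve; what the Lemma-based approach buys is brevity. Two points you should make explicit: (i) along the $u$-axis one only knows $\nu=\pm(f_u\times f_{vv})/|f_u\times f_{vv}|$, but the sign ambiguity is harmless because $\det(f_u,\nu_u,\nu)$ is even in $\nu$; (ii) the equivalence ``line of curvature $\Leftrightarrow$ $d\nu(\gamma')$ and $df(\gamma')$ linearly dependent along $\gamma$'' is not proved here either --- at singular points it is the definition adopted in \cite{istake2,t2} rather than the classical Rodrigues theorem, so your proof, like the paper's citation, ultimately rests on that convention.
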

For other geometric properties of these invariants, see \cite{hhnsuy,hnuy,hnsuy,msuy,ms,suy,istake2,t2,t1,t3} for example. 

On the other hand, we can take a $C^\infty$ map $h\colon U\to\R^3\setminus\{\0\}$ 
satisfying $f_v=vh$ because $f_v(u,0)=\0$ and $\eta\lambda(u,0)=\lambda_v(u,0)=\det(f_u,f_{vv},\nu)(u,0)\neq0$. 
Thus $\{f_u,h,\nu\}$ forms a frame. 
Using these maps, we define the following functions on $U$: 
$\wtil{E}=|f_u|^2$, $\wtil{F}=\inner{f_u}{h}$, $\wtil{G}=|h|^2$, 
$\wtil{L}=-\inner{f_u}{\nu_u}$, $\wtil{M}=-\inner{h}{\nu_u}$ and 
$\wtil{N}=-\inner{h}{\nu_v}$, 
where $|\cdot|=\sqrt{\inner{\cdot}{\cdot}}$.
If we take a special adapted coordinate system $(u,v)$, then invariants as in \eqref{kappas} can be written as 
\begin{equation}\label{eq:curvature1}
\kappa_s=-\dfrac{\wtil{E}_{vv}}{2},\quad \kappa_\nu=\wtil{L},\quad \kappa_c=2{\wtil{N}},\quad \kappa_t=\wtil{M}
\end{equation}
along the $u$-axis (see \cite[Proposition 1.8]{hhnsuy}, \cite[Lemma 3.4]{hnsuy} and \cite[Lemma 2.7]{t2}). 
\begin{lem}\label{lem:Weingarten}
Take a special adapted coordinate system $(U;u,v)$ centered at a cuspidal edge $p$. 
Then the differentials $\nu_u$ and $\nu_v$ of the Gauss map $\nu$ can be expressed as 
\begin{equation}\label{eq:diff-nu}
\nu_u=-\kappa_\nu f_u-\kappa_t h,\quad 
\nu_v=-\dfrac{\kappa_c}{2}h
\end{equation}
along the $u$-axis.
\end{lem}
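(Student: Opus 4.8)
The plan is to expand $\nu_u$ and $\nu_v$ with respect to the frame $\{f_u,h,\nu\}$, which is orthonormal along the $u$-axis. Indeed, since the coordinate system is special adapted, $\{f_u,f_{vv},\nu\}$ is an orthonormal frame along the $u$-axis; and from $f_v=vh$ we obtain $f_{vv}=h+vh_v$, so that $f_{vv}=h$ on the $u$-axis. Hence $\{f_u,h,\nu\}$ is orthonormal there, and any vector field along the $u$-axis is recovered from its three inner products with $f_u$, $h$ and $\nu$.

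First I would dispose of the $\nu$-components. Differentiating $\inner{\nu}{\nu}=1$ in $u$ and in $v$ yields $\inner{\nu_u}{\nu}=\inner{\nu_v}{\nu}=0$, so both $\nu_u$ and $\nu_v$ lie in the span of $f_u$ and $h$ along the $u$-axis. It then remains only to identify the $f_u$- and $h$-coefficients.

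For $\nu_u$, those coefficients are $\inner{\nu_u}{f_u}$ and $\inner{\nu_u}{h}$. By the definitions $\wtil{L}=-\inner{f_u}{\nu_u}$ and $\wtil{M}=-\inner{h}{\nu_u}$, together with $\kappa_\nu=\wtil{L}$ and $\kappa_t=\wtil{M}$ from \eqref{eq:curvature1}, these equal $-\kappa_\nu$ and $-\kappa_t$, giving $\nu_u=-\kappa_\nu f_u-\kappa_t h$. For $\nu_v$, the $h$-coefficient is $\inner{\nu_v}{h}=-\wtil{N}=-\kappa_c/2$ by \eqref{eq:curvature1}.

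The single step that genuinely invokes the \emph{front structure} is the vanishing of the $f_u$-coefficient $\inner{\nu_v}{f_u}$ of $\nu_v$ along the $u$-axis, which I regard as the main (though mild) obstacle. Here I would use the orthogonality condition $\inner{f_u}{\nu}=0$, valid on all of $U$. Differentiating it in $v$ gives $\inner{f_{uv}}{\nu}+\inner{f_u}{\nu_v}=0$, hence $\inner{f_u}{\nu_v}=-\inner{f_{uv}}{\nu}$. Since $f_v=vh$, we have $f_{uv}=vh_u$, which vanishes on the $u$-axis; therefore $\inner{f_u}{\nu_v}=0$ there. Combining the computed coefficients yields $\nu_v=-\dfrac{\kappa_c}{2}h$ along the $u$-axis, completing the proof.
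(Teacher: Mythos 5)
Your proof is correct. The paper's own proof is shorter but less self-contained: it quotes the Weingarten-type formula of \cite[Lemma 2.1]{t1}, which expresses $\nu_u$ and $\nu_v$ on all of $U$ in the frame $\{f_u,h\}$ with coefficients built from $\wtil{E},\wtil{F},\wtil{G},\wtil{L},\wtil{M},\wtil{N}$, and then simply substitutes $\wtil{E}(u,0)=\wtil{G}(u,0)=1$, $\wtil{F}(u,0)=0$ and \eqref{eq:curvature1}. You instead re-derive exactly the piece of that formula that is needed along the $u$-axis: orthonormality of $\{f_u,h,\nu\}$ at $v=0$ (via $f_{vv}=h+vh_v$) lets you read off the $f_u$- and $h$-coefficients of $\nu_u$, and the $h$-coefficient of $\nu_v$, directly from the definitions of $\wtil{L},\wtil{M},\wtil{N}$ together with \eqref{eq:curvature1}; and you isolate the one genuinely nontrivial coefficient, $\inner{\nu_v}{f_u}$, killing it by differentiating the frontal identity $\inner{f_u}{\nu}\equiv0$ and using $f_{uv}=vh_u$. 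That last step is precisely the source of the factor $v$ in the numerator $\wtil{F}\wtil{N}-v\wtil{G}\wtil{M}$ of the cited formula: indeed $\inner{\nu_v}{f_u}=-\inner{f_{uv}}{\nu}=-v\inner{h_u}{\nu}=-v\wtil{M}$, since $\inner{h}{\nu}\equiv0$ on $U$. So the two arguments rest on the same underlying computation; the paper's citation buys brevity and a formula valid off the singular axis (which it reuses elsewhere, e.g.\ in the proof of Proposition \ref{thm:K-nu}), while your version buys a proof readable without consulting \cite{t1}.
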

\begin{proof}
By \cite[Lemma 2.1]{t1}, it holds that  
\begin{equation*}
\nu_u=\frac{\wtil{F}\wtil{M}-\wtil{G}\wtil{L}}{\wtil{E}\wtil{G}-\wtil{F}^2}f_u+
\frac{\wtil{F}\wtil{L}-\wtil{E}\wtil{M}}{\wtil{E}\wtil{G}-\wtil{F}^2}h,\ 
\nu_v=\frac{\wtil{F}\wtil{N}-v\wtil{G}\wtil{M}}{\wtil{E}\wtil{G}-\wtil{F}^2}f_u+
\frac{v\wtil{F}\wtil{M}-\wtil{E}\wtil{N}}{\wtil{E}\wtil{G}-\wtil{F}^2}h.
\end{equation*}
Since $\wtil{E}(u,0)=\wtil{G}(u,0)=1$, $\wtil{F}(u,0)=0$ and \eqref{eq:curvature1} hold, we have the assertion.
\end{proof}

We now recall principal curvatures. 
Let $f$ be a front with cuspidal edge $p$. 
Take an adapted coordinate system $(U;u,v)$ around $p$. 
Then we set functions $\kappa_j\colon U\setminus\{v=0\}\to\R$ ($j=1,2$) as 
$$\kappa_1=H+\sqrt{H^2-K},\quad \kappa_2=H-\sqrt{H^2-K},$$
where $K$ and $H$ are the Gaussian and the mean curvature defined on $U\setminus\{v=0\}$. 
These functions are {\it principal curvatures} of $f$ since $\kappa_1\kappa_2=K$ and $2H=\kappa_1+\kappa_2$ hold. 
It is known that one of $\kappa_j$ ($j=1,2$) can be extended as a bounded $C^\infty$ function on $U$, say $\kappa$, 
and another, write $\tilde{\kappa}$, diverges near the $u$-axis (\cite{mu,t1,t2}). 
Moreover, $\kappa=\kappa_\nu$ holds along the $u$-axis, and $\hat{\kappa}:=\lambda\tilde{\kappa}$ 
is a bounded $C^\infty$ function on $U$ and proportional to $\kappa_c$ on the $u$-axis (cf. \cite[Theorem 3.1 and Remark 3.2]{t2}). 
In particular, it holds that
\begin{equation}\label{eq:k-hat}
\hat{\kappa}(p)=\dfrac{\kappa_c(p)}{2}
\end{equation}
at a cuspidal edge $p$ when we take a special adapted coordinate system $(U;u,v)$ around $p$ (see \cite[Remark 3.2]{t2} and \eqref{eq:curvature1}). 
\section{Singularities of Gauss maps and the Gaussian curvature}
We consider the Gauss map $\nu$ of a front $f\colon\Sig\to\R^3$ with a cuspidal edge $p$. 
Let $(U;u,v)$ be an adapted coordinate system around $p$. 
Then $\kappa$ and $\tilde{\kappa}$ denote the bounded $C^\infty$ principal curvature and the unbounded principal curvature on $U$, respectively. 
Let $\Lambda\colon U\to\R$ be the discriminant function of $\nu$ as in \eqref{eq:lambdas}. 
Then the set of singular points of $\nu$ is $S(\nu)=\{q\in U\ |\ \Lambda(q)=0\}$. 
By the Weingarten formula (cf. \cite[Lemma 2.1]{t1}), we have 
$$\Lambda(u,v)=\kappa(u,v)\hat{\kappa}(u,v)=\lambda(u,v)K(u,v)=\hat{K}(u,v),$$
where $\lambda$ is the signed area density function of $f$ as in \eqref{eq:lambdas} and $\hat{\kappa}=\lambda\tilde{\kappa}$. 
Since $\hat{\kappa}(p)\neq0$, $\Lambda(p)=0$ if and only if $\kappa(p)=\kappa_\nu(p)=0$ (\cite{msuy,t4}). 
Thus we may assume that $S(\nu)=\{q\in U\ |\ \kappa(q)=0\}$ holds locally. 
By Fact \ref{fact:Gauss-kn}, if the Gaussian curvature $K$ is bounded near $p$, 
then the $u$-axis is also a set of singular points of $\nu$. 
We say that a singular point $q\in S(\nu)$ of $\nu$ is {\it non-degenerate} if $(\partial_u\kappa(q),\partial_v\kappa(q))\neq(0,0)$, 
where $\partial_u\kappa=\partial \kappa/\partial u$ and $\partial_v\kappa=\partial \kappa/\partial v$. 
Otherwise, we say a {\it degenerate singular point} of $\nu$. 
If $p$ is a non-degenerate singular point of $\nu$, 
then it follows from the implicit function theorem that 
there exist a neighborhood $V$ of $p$ and a regular curve $\sigma=\sigma(\tau)\colon(-\delta,\delta)\to V$ ($\delta>0$) such that 
$\sigma(0)=p$ and $\Lambda(\sigma(\tau))=0$ on $V$ (cf. \cite{bgm,bl,ifrt,s1,w}). 
We call the curve $\sigma$ the {\it parabolic curve} of $f$ or the {\it singular curve} of $\nu$.
%
\begin{lem}[{cf. \cite[Lemma 3.5]{t4}}]\label{lem:non-degeneracy}
Let $f\colon\Sig\to\R^3$ be a front with a cuspidal edge $p$ and 
$\nu$ the Gauss map of $f$. 
Suppose that $p$ is also a singular point of $\nu$. 
Then $p$ is a non-degenerate singular point of $\nu$ if and only if 
$\kappa_\nu'(p)\neq0$ or $4\kappa_t(p)^2+\kappa_s(p)\kappa_c(p)^2\neq0$ holds.
\end{lem}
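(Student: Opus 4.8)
The plan is to show that the pair $(\partial_u\kappa(p),\partial_v\kappa(p))$ is nonzero exactly under the stated condition, where $\kappa$ is the bounded principal curvature; by the definition of non-degeneracy recalled just before the statement, this is precisely what must be proved. Working in a special adapted coordinate system $(U;u,v)$, the $u$-component is immediate: since $\kappa(u,0)=\kappa_\nu(u)$ along the $u$-axis, one has $\partial_u\kappa(p)=\kappa_\nu'(p)$, so $\partial_u\kappa(p)\neq0$ is equivalent to $\kappa_\nu'(p)\neq0$. The whole problem thus reduces to computing $\partial_v\kappa(p)$.

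For the $v$-derivative I would exploit the identity $\Lambda=\kappa\hat{\kappa}$ from Section~3. Since $\hat{\kappa}$ is a bounded $C^\infty$ function with $\hat{\kappa}(p)=\kappa_c(p)/2\neq0$ by \eqref{eq:k-hat}, the quotient $\kappa=\Lambda/\hat{\kappa}$ is $C^\infty$ near $p$; moreover $p\in S(\nu)$ forces $\kappa(p)=\kappa_\nu(p)=0$. Differentiating the quotient and using $\Lambda(p)=0$ gives $\partial_v\kappa(p)=\partial_v\Lambda(p)/\hat{\kappa}(p)$, so that $\partial_v\kappa(p)\neq0$ if and only if $\partial_v\Lambda(p)\neq0$. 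It therefore suffices to evaluate $\partial_v\Lambda(p)$.

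To do this I would write $\Lambda=\det(\nu_u,\nu_v,\nu)$ and apply the Leibniz rule, obtaining $\partial_v\Lambda=\det(\nu_{uv},\nu_v,\nu)+\det(\nu_u,\nu_{vv},\nu)$, the third term vanishing. Along the $u$-axis $\{f_u,h,\nu\}$ is a positively oriented orthonormal frame, and Lemma~\ref{lem:Weingarten} gives $\nu_u=-\kappa_\nu f_u-\kappa_t h$ and $\nu_v=-(\kappa_c/2)h$; at $p$ the first reduces to $\nu_u=-\kappa_t h$ because $\kappa_\nu(p)=0$. The two determinants then single out specific frame components of $\nu_{uv}$ and $\nu_{vv}$, which I would obtain from the three elementary facts $f_{uv}(u,0)=\0$ (immediate from $f_v=vh$), $\inner{f_{uvv}}{f_u}=-\kappa_s$ (from $\wtil{E}_{vv}=-2\kappa_s$ together with $f_{uv}(u,0)=\0$), and $\inner{f_{uvv}}{\nu}=\kappa_t$ (from the expression for $\kappa_t$ in \eqref{kappas}), using $\nu_{uv}|_{v=0}=\partial_u(\nu_v|_{v=0})$ for the first determinant and the orthogonality relation $\inner{f_u}{\nu}=0$, whence $\inner{\nu_{vv}}{f_u}=-\inner{f_{uvv}}{\nu}$, for the second. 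This yields $\det(\nu_{uv},\nu_v,\nu)(p)=-\kappa_s\kappa_c^2/4$ and $\det(\nu_u,\nu_{vv},\nu)(p)=-\kappa_t^2$, hence $\partial_v\Lambda(p)=-\tfrac14\bigl(4\kappa_t(p)^2+\kappa_s(p)\kappa_c(p)^2\bigr)$. Since $\kappa_c(p)\neq0$ at a cuspidal edge, $\partial_v\kappa(p)\neq0$ is equivalent to $4\kappa_t(p)^2+\kappa_s(p)\kappa_c(p)^2\neq0$, and combining this with the $u$-component finishes the proof.

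I expect the computation of $\partial_v\Lambda(p)$ to be the only genuine obstacle, since it involves the second-order quantities $\nu_{uv}$ and $\nu_{vv}$ along the singular set, whose off-axis behavior is not directly supplied by Lemma~\ref{lem:Weingarten}. The decisive simplification is the observation that $\kappa_\nu(p)=0$ collapses $\nu_u$ to a multiple of $h$ at $p$, which removes any dependence of $\det(\nu_u,\nu_{vv},\nu)$ on the $h$-component of $\nu_{vv}$; one then needs only the $f_u$-component of $\nu_{vv}$, recovered painlessly from orthogonality rather than from differentiating the full Weingarten expression.
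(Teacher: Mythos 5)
Your proposal is correct, and I verified the frame computations: $\det(\nu_{uv},\nu_v,\nu)(p)=-\kappa_s\kappa_c^2/4$, $\det(\nu_u,\nu_{vv},\nu)(p)=-\kappa_t^2$, hence $\partial_v\Lambda(p)=-\tfrac14\bigl(4\kappa_t(p)^2+\kappa_s(p)\kappa_c(p)^2\bigr)$, which together with $\hat{\kappa}(p)=\kappa_c(p)/2$ from \eqref{eq:k-hat} reproduces exactly the formula \eqref{eq:kv}. The overall skeleton is the same as the paper's: both proofs reduce non-degeneracy to the non-vanishing of $(\partial_u\kappa(p),\partial_v\kappa(p))$ in a special adapted coordinate system and identify $\partial_u\kappa(p)=\kappa_\nu'(p)$. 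The difference lies entirely in how the $v$-derivative is obtained: the paper simply quotes the identity \eqref{eq:kv} from an external reference (Proposition 2.8 of \cite{t3}), whereas you derive it inside the paper's own toolkit, via the factorization $\Lambda=\kappa\hat{\kappa}$, a Leibniz expansion of $\partial_v\Lambda$, the expressions for $\nu_u,\nu_v$ in Lemma~\ref{lem:Weingarten}, and the components of $h_u$ and $\nu_{vv}$ recovered from \eqref{eq:curvature1} and orthogonality (the same data the paper itself later exploits in the proof of Theorem~\ref{cor:sign}). What your route buys is self-containedness and a transparent geometric source for the quantity $4\kappa_t^2+\kappa_s\kappa_c^2$ as $-4\partial_v\Lambda(p)$; what the paper's citation buys is brevity. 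Two minor points worth making explicit if you write this up: you should note that the orthonormal frame $\{f_u,h,\nu\}$ is positively oriented (the paper uses $\det(f_u,h,\nu)(p)=1$), and that the hypothesis $p\in S(\nu)$ gives only $\kappa_\nu(p)=0$ at the single point $p$, not along the axis --- your argument respects this, since $\nu_{uv}(p)$ is computed by differentiating the identity $\nu_v(u,0)=-(\kappa_c/2)h(u,0)$, which holds along the whole $u$-axis and involves no $\kappa_\nu$ term.
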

\begin{proof}
Let us take a special adapted coordinate system $(U;u,v)$ centered at $p$. 
Let $\kappa$ be the bounded $C^\infty$ principal curvature of $f$ on $U$. 
Then $\partial_u\kappa(p)=\kappa_\nu'(p)$ holds since $\kappa(u,0)=\kappa_\nu(u)$. 
On the other hand, we have 
\begin{equation}\label{eq:kv}
\partial_v\kappa(p)=-\dfrac{4\kappa_t(p)^2+\kappa_s(p)\kappa_c(p)^2}{2\kappa_c(p)}\neq0
\end{equation}
(see \cite[Proposition 2.8]{t3}). 
Thus the assertion holds.
\end{proof}
%
\begin{prop}\label{thm:K-nu}
Let $f\colon\Sig\to\R^3$ be a front with a cuspidal edge $p$. 
Suppose that the Gaussian curvature $K$ is bounded on a neighborhood $U$ of $p$. 
Then $p$ is a non-degenerate singular point of the Gauss map $\nu$ 
if and only if $K$ takes non-zero value near $p$. 
\end{prop}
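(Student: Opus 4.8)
The plan is to work in a special adapted coordinate system $(U;u,v)$ centered at $p$, so that the $u$-axis is the singular curve and the formulas \eqref{eq:curvature1} and \eqref{eq:kv} are available, and then to reduce the non-degeneracy condition to a single scalar condition on the $v$-derivative of the bounded principal curvature $\kappa$. Since $K$ is bounded near $p$, Fact \ref{fact:Gauss-kn} forces $\kappa_\nu\equiv0$ along $\gamma$, whence $\kappa(u,0)=\kappa_\nu(u)\equiv0$ and in particular $\partial_u\kappa(p)=\kappa_\nu'(p)=0$. Thus, by the very definition of non-degeneracy, $p$ is a non-degenerate singular point of $\nu$ if and only if $\partial_v\kappa(p)\neq0$, which by Lemma \ref{lem:non-degeneracy} and \eqref{eq:kv} is the same as $4\kappa_t(p)^2+\kappa_s(p)\kappa_c(p)^2\neq0$.

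Next I would identify the extended value of $K$ at $p$. Because $\kappa(u,0)\equiv0$, both $\Lambda=\kappa\hat\kappa$ and $\lambda$ vanish on the $u$-axis, while $K=\Lambda/\lambda$ stays bounded; writing $\lambda=v\,a$ and $\Lambda=v\,b$ with $a,b$ smooth (Hadamard's lemma) and passing to the limit $v\to0$ gives the continuous extension $K(u,0)=\Lambda_v(u,0)/\lambda_v(u,0)$, valid near $u=0$ since $\lambda_v(p)\neq0$. Differentiating $\Lambda=\kappa\hat\kappa$ in $v$ and using $\kappa(p)=0$ yields $\Lambda_v(p)=\partial_v\kappa(p)\,\hat\kappa(p)$, so that
\[
K(p)=\frac{\partial_v\kappa(p)\,\hat\kappa(p)}{\lambda_v(p)}.
\]

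Now $\lambda_v(p)\neq0$ by the cuspidal edge criterion (Fact \ref{fact:crit-ce}) and $\hat\kappa(p)=\kappa_c(p)/2\neq0$ by \eqref{eq:k-hat} together with the non-vanishing of $\kappa_c$ along the singular curve; substituting \eqref{eq:kv} even gives the explicit expression $K(p)=-\bigl(4\kappa_t(p)^2+\kappa_s(p)\kappa_c(p)^2\bigr)/\bigl(4\lambda_v(p)\bigr)$. In either form, the displayed formula shows $K(p)\neq0$ if and only if $\partial_v\kappa(p)\neq0$, that is, if and only if $p$ is non-degenerate. Reading ``$K$ takes non-zero value near $p$'' as non-vanishing of the continuously extended $K$ on a whole neighborhood of $p$---equivalently $K(p)\neq0$, by continuity---this delivers both implications at once.

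The main obstacle I anticipate is the careful treatment of the indeterminate $0/0$ limit defining $K$ on the singular curve: one must justify that $K$ extends continuously across the $u$-axis and compute its boundary value as a ratio of $v$-derivatives, which is precisely where the hypotheses $\lambda_v(p)\neq0$ and $\hat\kappa(p)\neq0$ are used. It is also worth flagging the subtlety that the equivalence genuinely concerns the boundary value $K(p)$: even at a degenerate $p$ the function $K$ may be non-zero at nearby points of the $u$-axis (those $q$ with $\partial_v\kappa(q)\neq0$), so ``near $p$'' must be understood as non-vanishing on a neighborhood rather than merely at some nearby point. Once the boundary-value formula is established, the stated equivalence is immediate.
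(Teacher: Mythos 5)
Your proof is correct and follows essentially the same route as the paper: both use Fact \ref{fact:Gauss-kn} to get $\kappa(u,0)=\kappa_\nu\equiv0$, divide out a factor of $v$ (you via $\lambda=va$, $\Lambda=vb$ and $K(u,0)=\Lambda_v/\lambda_v$; the paper via $\kappa=v\psi$ and $K=\psi\hat\kappa/\det(f_u,h,\nu)$) to obtain the boundary value $4K(p)=-4\kappa_t(p)^2-\kappa_s(p)\kappa_c(p)^2$, and then conclude with \eqref{eq:kv}, \eqref{eq:k-hat} and Lemma \ref{lem:non-degeneracy}. Your explicit flagging of the reading of ``non-zero near $p$'' as non-vanishing on a neighborhood is a reasonable clarification of what the paper leaves implicit.
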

\begin{proof}
Let $(U;u,v)$ be a special adapted coordinate system around $p$. 
Suppose that $K$ is bounded on $U$. 
Then by Fact \ref{fact:Gauss-kn}, $\kappa(u,0)=\kappa_\nu(u)=0$. 
Thus there exists a function $\psi\colon U\to\R$ such that $\kappa(u,v)=v\psi(u,v)$ by the division lemma (\cite{gg}), 
and hence $\psi(p)=\partial_v\kappa(p)$. 
The Gaussian curvature $K$ is given as 
$$K=\kappa\tilde{\kappa}=\dfrac{\kappa\hat{\kappa}}{\lambda}=\dfrac{\psi\hat{\kappa}}{\det(f_u,h,\nu)}$$
on $U$, where $\hat{\kappa}=\lambda\tilde{\kappa}$ and $\lambda=v\det(f_u,h,\nu)$. 
Since $\psi(p)=\partial_v\kappa(p)$ (see \eqref{eq:kv}), $2\hat{\kappa}(p)=\kappa_c(p)$ (see \eqref{eq:k-hat}) and $\det(f_u,h,\nu)(p)=1$, we have
\begin{equation}\label{eq:Gaussian-curvature}
4K(p)=-4\kappa_t(p)^2-\kappa_s(p)\kappa_c(p)^2
\end{equation}
(cf. \cite[Remark 3.19]{msuy}). 
By Lemma \ref{lem:non-degeneracy}, we get the conclusion.
\end{proof}
\begin{rem}\label{rem:parabolic}
By this proposition, if the Gaussian curvature $K$ of a front $f$ is bounded at cuapidal edge $p$ and the corresponding Gauss map 
has a non-degenerate singularity at $p$, then $K$ is automatically non-zero near $p$. 
Moreover, if $K$ of $f$ is non-zero bounded, $S(f)=S(\nu)$ locally. 
This means that the parabolic curve $\sigma$ of $f$ satisfies $\sigma(t)=\gamma(\pm t)$ for sufficient small $0<t<|\eps|$.
\end{rem}

\begin{rem}
Proposition \ref{thm:K-nu} corresponds to the following statement: 
a cuspidal edge $p$ of a front $f$ is also a non-degenerate singular point of $\nu$ 
if and only if $\log|K|$ does not vanish on $U\setminus S(f)$, 
where $U$ is a neighborhood of $p$. 
This can be found in \cite[Lemma 3.25]{suy3} in more general situation. 
Thus Proposition \ref{thm:K-nu} might be considered as a rephrasing of \cite[Lemma 3.25]{suy3} in terms of 
singularities of the Gauss map.
\end{rem}

We next consider types of singularities of the Gauss map. 
It is known that generic singularities of the Gauss map are a {\it fold} and a {\it cusp}, 
which are $\mathcal{A}$-equivalent to the germs  $(u,v)\mapsto(u,v^2)$ and $(u,v)\mapsto(u,v^3+uv)$ at $\0$, respectively (cf. \cite{bgm,bl,po,w}). 
These singularities are non-degenerate singularities of the Gauss map. 
For singular points of a $C^\infty$ map between $2$-dimensional manifolds, see \cite{bgm,bl,r,s1,w}.
%
\begin{fact}[{\cite[Proposition 3.12]{t4}}]\label{fact:singgauss}
Let $f\colon\Sig\to\R^3$ be a front with a cuspidal edge $p\in \Sig$ and $\nu$ its Gauss map. 
If the Gaussian curvature $K$ of $f$ is non-zero bounded on a sufficiently small neighborhood of $p$, 
then 
\begin{itemize}
\item $\nu$ at $p$ is a fold if and only if $\kappa_t(p)(4\kappa_t(p)^2+\kappa_s(p)\kappa_c(p)^2)\neq0$,
\item $\nu$ at $p$ is a cusp if and only if $\kappa_t(p)=0$, $\kappa_t'(p)\neq0$ and $\kappa_s(p)\neq0$.
\end{itemize}
\end{fact}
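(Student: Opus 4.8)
The plan is to apply the classical Whitney-type criteria for folds and cusps of a smooth map between surfaces (cf.\ \cite{bgm,s1,w}) to the Gauss map $\nu\colon U\to S^2$, whose Jacobian is the discriminant function $\Lambda=\det(\nu_u,\nu_v,\nu)$. Recall the criteria in the form adapted to a null field: at a corank-one singular point $p$ with $\Lambda(p)=0$, let $\eta^\nu$ be a null vector field of $\nu$ (so $d\nu(\eta^\nu)=\0$ along $S(\nu)$). Then $\nu$ has a fold at $p$ if and only if $\eta^\nu\Lambda(p)\neq0$; and $\nu$ has a cusp at $p$ if and only if $\eta^\nu\Lambda(p)=0$, $d\Lambda(p)\neq\0$, and the derivative of $\eta^\nu\Lambda$ along the singular curve $S(\nu)$ is non-zero at $p$. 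The whole argument then reduces to computing $\eta^\nu$, the value $\eta^\nu\Lambda(p)$, and its derivative along $S(\nu)$ in terms of the invariants.

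First I fix a special adapted coordinate system $(U;u,v)$ centered at $p$. Since $K$ is non-zero bounded, Remark~\ref{rem:parabolic} gives $S(\nu)=S(f)=\{v=0\}$ locally, and Fact~\ref{fact:Gauss-kn} forces $\kappa_\nu\equiv0$ along the $u$-axis, whence $\partial_u\kappa(u,0)=\kappa_\nu'(u)=0$ for \emph{all} $u$. Substituting $\kappa_\nu=0$ into Lemma~\ref{lem:Weingarten} gives $\nu_u=-\kappa_t h$ and $\nu_v=-(\kappa_c/2)h$ along the axis, so both partials are parallel to $h$; as $\nu_v=-(\kappa_c/2)h\neq\0$ the point $p$ is corank one, and since $\kappa_c(p)\neq0$ the kernel of $d\nu$ along the axis is spanned by
\[
\eta^\nu=\frac{\kappa_c}{2}\partial_u-\kappa_t\partial_v .
\]
Next I use $\Lambda=\kappa\hat\kappa$ with $\kappa(u,0)=0$ and $\hat\kappa(p)\neq0$. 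Because $\Lambda_u(u,0)=\hat\kappa\,\partial_u\kappa=0$ and $\Lambda_v(u,0)=\hat\kappa\,\partial_v\kappa$ on the axis, I obtain
\[
\eta^\nu\Lambda(u,0)=-\kappa_t(u)\,\hat\kappa(u,0)\,\partial_v\kappa(u,0).
\]
Evaluating at $p$ and inserting $\hat\kappa(p)=\kappa_c(p)/2$ from \eqref{eq:k-hat} together with the value of $\partial_v\kappa(p)$ from \eqref{eq:kv} gives $\eta^\nu\Lambda(p)=\tfrac14\kappa_t(p)\big(4\kappa_t(p)^2+\kappa_s(p)\kappa_c(p)^2\big)$. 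Hence the fold criterion $\eta^\nu\Lambda(p)\neq0$ is literally the stated condition, which proves the first bullet (and no separate non-degeneracy check is needed, since $\eta^\nu\Lambda(p)\neq0$ already forces $d\Lambda(p)\neq\0$).

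For the cusp I assume $\eta^\nu\Lambda(p)=0$, i.e.\ $\kappa_t(p)=0$ by the fold computation. The regularity $d\Lambda(p)\neq\0$ reduces to $\partial_v\kappa(p)\neq0$, and by \eqref{eq:kv} with $\kappa_t(p)=0$ this equals $-\kappa_s(p)\kappa_c(p)/2$, non-zero precisely when $\kappa_s(p)\neq0$ (automatic here, since $4K(p)=-\kappa_s(p)\kappa_c(p)^2\neq0$ by \eqref{eq:Gaussian-curvature}). Finally I differentiate the axis expression for $\eta^\nu\Lambda$ along $S(\nu)=\{v=0\}$; using $\kappa_t(p)=0$ to discard every surviving term carrying a factor $\kappa_t$, I get
\[
\partial_u\big(\eta^\nu\Lambda\big)(p)=-\kappa_t'(p)\,\hat\kappa(p)\,\partial_v\kappa(p),
\]
which is non-zero if and only if $\kappa_t'(p)\neq0$. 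Collecting $\kappa_t(p)=0$, $\kappa_s(p)\neq0$, and $\kappa_t'(p)\neq0$ yields the cusp criterion. The routine parts are the substitutions; the step needing genuine care is the cusp non-degeneracy, where one must compute the second-order condition \emph{invariantly}. The point is that at $p$ the $\partial_v$-component of $\eta^\nu$ vanishes (as $\kappa_t(p)=0$), so $\eta^\nu(p)$ is parallel to $\partial_u$, the tangent of $S(\nu)$; consequently $\eta^\nu(\eta^\nu\Lambda)(p)=(\kappa_c(p)/2)\,\partial_u(\eta^\nu\Lambda)(p)$ depends only on the values of $\eta^\nu$ and $\kappa$ along the $u$-axis, so the ambiguity in extending $\eta^\nu$ off the axis is immaterial. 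The crucial geometric input making the derivative along $S(\nu)$ tractable is that $\kappa_\nu$ vanishes identically—not merely at $p$—along the axis, which kills the $\partial_u\kappa$ contributions throughout.
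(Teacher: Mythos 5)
There is nothing to compare against inside the paper: this statement is quoted as a Fact with a citation to \cite[Proposition 3.12]{t4}, and the paper itself contains no proof of it. So I can only assess your argument on its own terms, and it is correct; moreover it is essentially self-contained given results the paper does state. The criteria you invoke are the classical Whitney--Saji fold/cusp criteria for corank-one singular points of plane-to-plane maps (they are in the cited sources \cite{bgm,s1,w}, with the cusp condition usually phrased as $\eta^\nu\Lambda(p)=0$, $d\Lambda(p)\neq\0$, $\eta^\nu\eta^\nu\Lambda(p)\neq0$). Your computations check out: $\eta^\nu=(\kappa_c/2)\partial_u-\kappa_t\partial_v$ is a null field along $S(\nu)=\{v=0\}$ by Lemma~\ref{lem:Weingarten} once Fact~\ref{fact:Gauss-kn} forces $\kappa_\nu\equiv0$; $\Lambda_u\equiv0$ on the axis because $\kappa(u,0)\equiv0$; and inserting \eqref{eq:k-hat} and \eqref{eq:kv} gives $\eta^\nu\Lambda(p)=\tfrac14\kappa_t(p)\bigl(4\kappa_t(p)^2+\kappa_s(p)\kappa_c(p)^2\bigr)$, which is literally the fold condition in the statement. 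You also handle the two genuinely delicate points correctly: the equivalence of your ``derivative along $S(\nu)$'' formulation with $\eta^\nu\eta^\nu\Lambda(p)\neq0$ (valid because $\eta^\nu(p)$ becomes tangent to $S(\nu)$ exactly when $\kappa_t(p)=0$), and the immateriality of how $\eta^\nu$ is extended off the axis.

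One caveat worth making explicit: your claims that ``$4\kappa_t(p)^2+\kappa_s(p)\kappa_c(p)^2\neq0$'' and hence that $\kappa_s(p)\neq0$ is automatic in the cusp bullet rest on reading ``non-zero bounded'' as saying the \emph{extended} Gaussian curvature satisfies $K(p)\neq0$, equivalently $4\kappa_t(p)^2+\kappa_s(p)\kappa_c(p)^2\neq0$ by \eqref{eq:Gaussian-curvature}. That is indeed the reading the paper uses (cf.\ Proposition~\ref{thm:K-nu}, Remark~\ref{rem:parabolic} and Corollary~\ref{cor:not-fold}), so your argument, including the observed redundancy of $\kappa_s(p)\neq0$ in the second bullet, is consistent with the paper; but if one only assumed $K\neq0$ off $S(f)$ with possibly vanishing limit, the reduction of the fold condition to $\kappa_t(p)\neq0$ and the automatic non-vanishing of $\kappa_s(p)$ would both fail, so you should state the interpretation when writing this up.
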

By Lemma \ref{lem:non-degeneracy} and Fact \ref{fact:singgauss}, we have the following assertion immediately.
\begin{cor}\label{cor:not-fold}
Let $f$ be a front in $\R^3$ with a cuspidal edge $p$. 
Suppose that the Gaussian curvature $K$ of $f$ is non-zero bounded near $p$. 
Then $p$ is a non-degenerate singular point other than a fold of $\nu$ if and only if $\kappa_t(p)=0$ and $\kappa_s(p)\neq0$.
\end{cor}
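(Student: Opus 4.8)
The plan is to combine Proposition~\ref{thm:K-nu}, Fact~\ref{fact:singgauss} and the explicit expression \eqref{eq:Gaussian-curvature} for $K(p)$, so that the corollary becomes a short deduction rather than a new computation. First I would note that, since $K$ is assumed non-zero bounded near $p$, Proposition~\ref{thm:K-nu} already guarantees that $p$ is a non-degenerate singular point of $\nu$. Consequently the phrase ``$p$ is a non-degenerate singular point other than a fold'' is, under our hypothesis, equivalent to the single condition ``$p$ is not a fold,'' and the whole problem reduces to characterizing when the fold does \emph{not} occur.

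Next I would apply the fold criterion of Fact~\ref{fact:singgauss}: $\nu$ at $p$ is a fold exactly when $\kappa_t(p)\bigl(4\kappa_t(p)^2+\kappa_s(p)\kappa_c(p)^2\bigr)\neq0$. Hence $p$ fails to be a fold precisely when this product vanishes, i.e.\ when $\kappa_t(p)=0$ or $4\kappa_t(p)^2+\kappa_s(p)\kappa_c(p)^2=0$. The decisive step is to discard the second alternative using \eqref{eq:Gaussian-curvature}, namely $4K(p)=-4\kappa_t(p)^2-\kappa_s(p)\kappa_c(p)^2$. Because $K(p)\neq0$ by assumption, the quantity $4\kappa_t(p)^2+\kappa_s(p)\kappa_c(p)^2=-4K(p)$ cannot vanish, so the product vanishes if and only if $\kappa_t(p)=0$.

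Finally I would upgrade $\kappa_t(p)=0$ to the stated pair of conditions. Substituting $\kappa_t(p)=0$ into \eqref{eq:Gaussian-curvature} gives $\kappa_s(p)\kappa_c(p)^2=-4K(p)\neq0$; since $\kappa_c(p)\neq0$ for a cuspidal edge (by \cite[Proposition 3.11]{msuy}), this is equivalent to $\kappa_s(p)\neq0$. For the converse, if $\kappa_t(p)=0$ and $\kappa_s(p)\neq0$ then $4\kappa_t(p)^2+\kappa_s(p)\kappa_c(p)^2=\kappa_s(p)\kappa_c(p)^2\neq0$, so the fold product vanishes and $p$ is not a fold, while non-degeneracy comes for free from Proposition~\ref{thm:K-nu}. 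Putting these together with the reduction from the first paragraph yields the asserted equivalence. I do not anticipate a genuine obstacle here: the only point requiring care is observing that \eqref{eq:Gaussian-curvature} forces the second factor $4\kappa_t(p)^2+\kappa_s(p)\kappa_c(p)^2$ to be non-zero, which is exactly what makes ``not a fold'' collapse onto the single condition $\kappa_t(p)=0$.
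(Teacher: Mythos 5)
Your proposal is correct and takes essentially the same route as the paper, which deduces the corollary immediately from Lemma \ref{lem:non-degeneracy} and Fact \ref{fact:singgauss}: your use of Proposition \ref{thm:K-nu} together with \eqref{eq:Gaussian-curvature} is a repackaging of those same ingredients, since \eqref{eq:Gaussian-curvature} identifies $4\kappa_t(p)^2+\kappa_s(p)\kappa_c(p)^2$ with $-4K(p)$, and (because boundedness of $K$ forces $\kappa_\nu'\equiv0$ along the singular curve) its non-vanishing is exactly the non-degeneracy condition of Lemma \ref{lem:non-degeneracy}. In particular, your key step of ruling out the vanishing of the second factor in the fold criterion coincides in substance with the paper's appeal to Lemma \ref{lem:non-degeneracy}.
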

If $\kappa_t$ vanishes identically along the singular curve, 
namely, if the singular curve is a line of curvature (cf. Fact \ref{fact:curve-line}), the next assertion holds.
\begin{prop}\label{prop:cone-like}
Let $f\colon\Sig\to\R^3$ be a front and $p\in\Sig$ a cuspidal edge. 
Suppose that the Gaussian curvature $K$ of $f$ is non-zero bounded on a neighborhood $U$ of $p$, 
and the singular curve $\gamma(t)$ $($in $U)$ through $p$ is a line of curvature of $f$. 
Then the singular locus $\check{\nu}(t)=\nu(\sigma(t))$ of the Gauss map $\nu$ is a single point in $S^2$, 
where $\sigma(t)=\gamma(\pm t)$.
\end{prop}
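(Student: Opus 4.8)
The plan is to show that $\check{\nu}$ has identically vanishing velocity, and to carry out this computation in a special adapted coordinate system using the Weingarten-type formula of Lemma \ref{lem:Weingarten}. Since $\check{\nu}$ is a curve in $S^2$, proving $\check{\nu}'\equiv 0$ immediately gives that its image is a single point.

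First I would fix a special adapted coordinate system $(U;u,v)$ centered at $p$, so that the singular curve is the $u$-axis, $\gamma(u)=(u,0)$. Because $K$ is non-zero bounded, the discussion in Remark \ref{rem:parabolic} yields $S(f)=S(\nu)$ locally and allows us to take the parabolic curve as $\sigma(t)=\gamma(\pm t)=(\pm t,0)$, exactly as in the statement. Consequently $\check{\nu}(t)=\nu(\sigma(t))=\nu(\pm t,0)$, and by the chain rule $\check{\nu}'(t)=\pm\nu_u(\pm t,0)$. Thus the assertion that $\check{\nu}$ is a single point is equivalent to the vanishing of $\nu_u$ along the $u$-axis.

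Next I would use the two hypotheses to annihilate the two terms in the expression for $\nu_u$. By Lemma \ref{lem:Weingarten}, along the $u$-axis one has $\nu_u=-\kappa_\nu f_u-\kappa_t h$. Since $K$ is bounded near the cuspidal edge, Fact \ref{fact:Gauss-kn} forces $\kappa_\nu$ to vanish identically along $\gamma$; and since $\gamma$ is a line of curvature, Fact \ref{fact:curve-line} forces $\kappa_t$ to vanish identically along $\gamma$. Substituting both vanishings into the formula gives $\nu_u\equiv 0$ along the $u$-axis, hence $\check{\nu}'(t)\equiv 0$, so $\check{\nu}$ is constant, i.e., a single point of $S^2$.

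I do not expect a genuine obstacle: the result follows directly once the formula of Lemma \ref{lem:Weingarten} is in hand and the two geometric conditions are translated into $\kappa_\nu\equiv 0$ and $\kappa_t\equiv 0$. The only point needing minor care is the legitimacy of parametrizing the parabolic curve as $\sigma(t)=\gamma(\pm t)$, which is precisely what the non-zero boundedness of $K$ supplies via Remark \ref{rem:parabolic}; the sign ambiguity is harmless, as it only contributes an overall factor $\pm 1$ to $\check{\nu}'$ and does not affect its vanishing.
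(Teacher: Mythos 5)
Your proposal is correct and follows essentially the same route as the paper's own proof: both fix a special adapted coordinate system, use Remark \ref{rem:parabolic} (equivalently Proposition \ref{thm:K-nu}) to identify the parabolic curve with $\gamma(\pm t)$, and then kill both terms of $\nu_u=-\kappa_\nu f_u-\kappa_t h$ from Lemma \ref{lem:Weingarten} via Fact \ref{fact:Gauss-kn} ($\kappa_\nu\equiv0$ from boundedness of $K$) and Fact \ref{fact:curve-line} ($\kappa_t\equiv0$ from the line-of-curvature hypothesis). No gaps; the handling of the sign ambiguity in $\sigma(t)=\gamma(\pm t)$ matches the paper's remark that the case $\sigma(u)=\gamma(-u)$ is analogous.
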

\begin{proof}
Let us take a special adapted coordinate system $(U;u,v)$ centered at $p$.
Assume that $K$ is non-zero bounded on $U$. 
Then by Proposition \ref{thm:K-nu}, $p$ is also a non-degenerate singular point of $\nu$. 
One may choose the parabolic curve $\sigma$ satisfying $\sigma(u)=\gamma(u)$. 
We consider the behavior of $\nu\circ\sigma(u)=\nu\circ\gamma(u)=\nu(u,0)$. 
Since $\kappa_\nu=0$, we have $\nu_u=- \kappa_t(u)h(u,0)$ by \eqref{eq:diff-nu} in Lemma \ref{lem:Weingarten}. 
Thus $\nu_u$ is a zero vector along the $u$-axis by the assumption, and hence 
$\nu$ is a constant map along the $u$-axis. 
For the case of $\sigma(u)=\gamma(-u)$, we can show in a similar way. 
\end{proof}
This says that the singular locus $\nu\circ\gamma$ of the Gauss map $\nu$ is a single point 
if $K$ is non-zero bounded near a cuspidal edge $p$ and $\gamma$ is a line of curvature (see Remark \ref{rem:parabolic}).
In particular, if a surface of revolution has a cuspidal edge and non-zero bounded Gaussian curvature, 
the image of its Gauss map degenerates to a point. 
\begin{ex}\label{ex:rotation}
Let $f\colon(-2\pi,2\pi)\times(0,2\pi)\ni(u,v)\mapsto f(u,v)\in\R^3$ be a map given by 
$$f(u,v)=((2+\cos{u})\cos{v},(2+\cos{u})\sin{v},u-\sin{u}).$$
This is a cycloid of revolution whose singularities are cuspidal edges. 
By direct calculations, one can see that $S(f)=\{u=0\}$, $\eta=-\partial_u$ and the Gaussian curvature $K$ is negative bounded. 
Actually, $K=-1/4(2+\cos{u})<0$ holds. 
Moreover, $\kappa_s>0$ and $\kappa_t\equiv0$ hold along the $v$-axis. 
On the other hand, the Gauss map $\nu$ of $f$ is 
$$\nu(u,v)=\left(\sin{\dfrac{u}{2}}\cos{v},\sin{\dfrac{u}{2}}\sin{v},\cos{\dfrac{u}{2}}\right).$$
Note that $S(\nu)=S(f)=\{u=0\}$. 
By Proposition \ref{prop:cone-like}, $\nu(S(\nu))$ degenerates to a point. 
In fact, $\nu(0,v)=(0,0,1)$ holds (see Figure \ref{fig:cycloid}). 
\begin{figure}[htbp]
  \begin{center}
    \begin{tabular}{c}

      \begin{minipage}{0.33\hsize}
        \begin{center}
          \includegraphics[width=3cm]{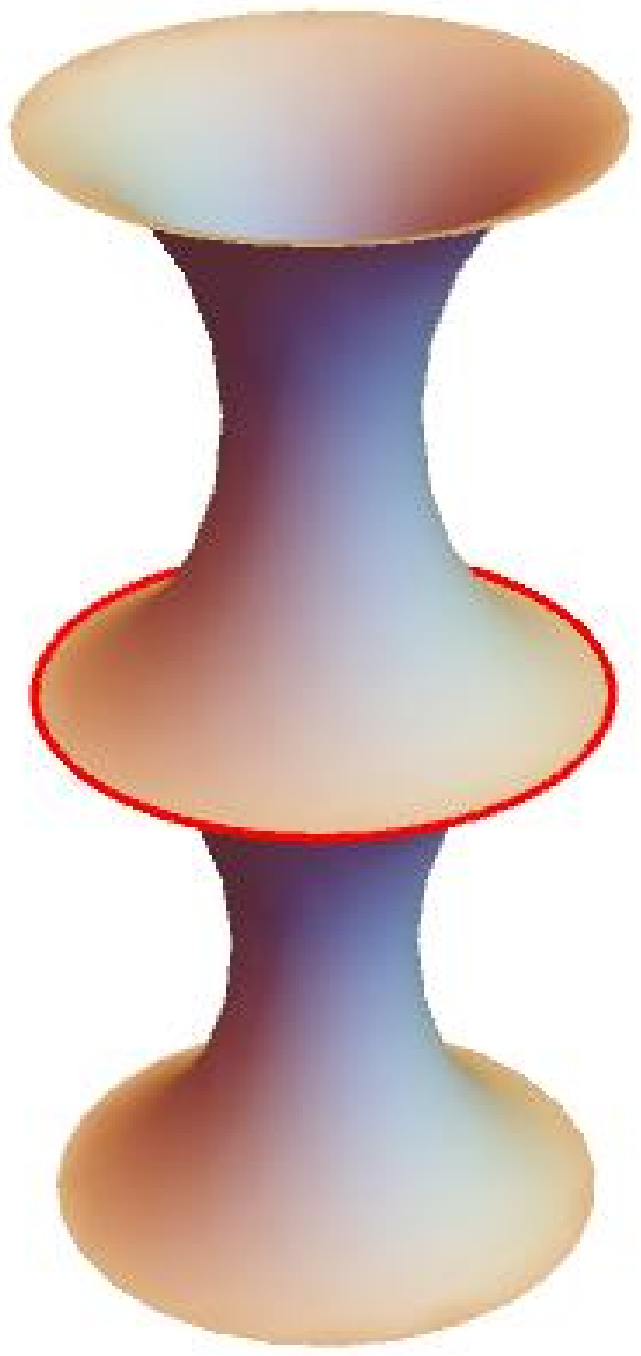}
        \end{center}
      \end{minipage}

      \begin{minipage}{0.33\hsize}
        \begin{center}
          \includegraphics[width=4cm]{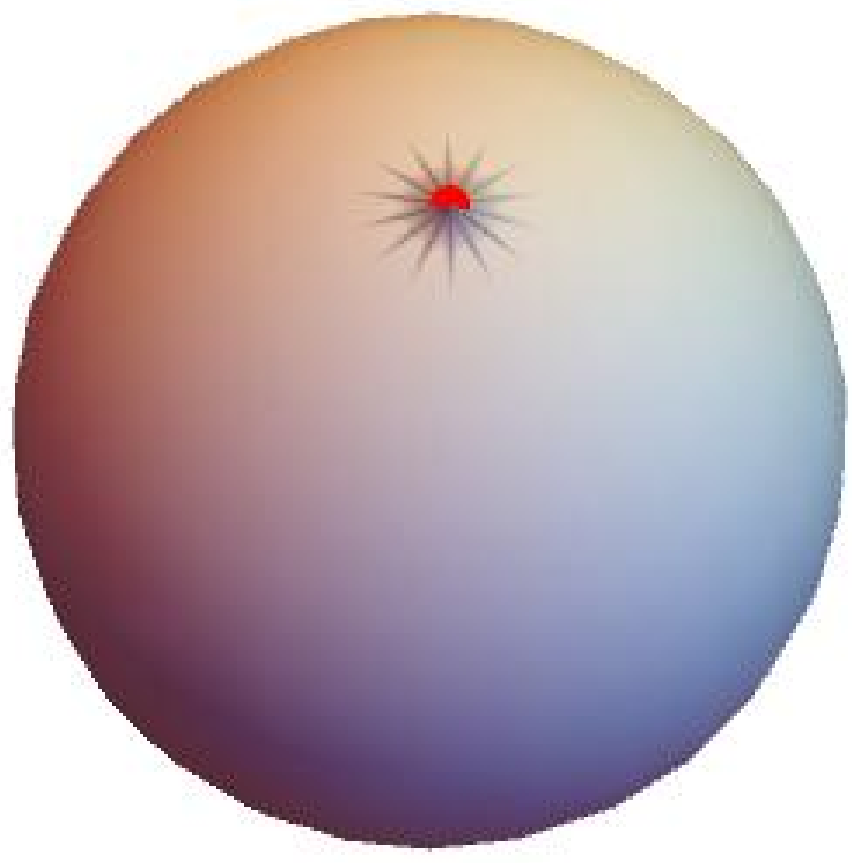}
        \end{center}
      \end{minipage}

    \end{tabular}
    \caption{Cycloid of revolution (left) and its Gauss map image (right) of Example \ref{ex:rotation}. 
    A curve in the left figure and a point in the right figure which are colored by red are the singular loci of $f$ and $\nu$, respectively.}
    \label{fig:cycloid}
  \end{center}
\end{figure}
\end{ex}
We remark that a similar statement holds between 
a flat front in the hyperbolic $3$-space $H^3$ and its $\Delta_1$-dual flat front in the de Sitter $3$-space $S^3_1$ (see \cite[Corollary 4.3]{st}).

By using above results, we give a proof of Theorem \ref{thm:Gauss-ks}.
\begin{proof}[Proof of Theorem \ref{thm:Gauss-ks}]
Let $(U;u,v)$ be a special adapted coordinate system centered at a cuspidal edge $p$. 
Since $p$ is a non-degenerate singular point of the Gauss map $\nu$ and the Gaussian curvature $K$ is bounded, 
$K$ takes non-zero value on $U$. 
Since $\nu$ at $p$ is not a fold, $\kappa_t(p)=0$ and $\kappa_s(p)\neq0$ by Corollary \ref{cor:not-fold}. 
Thus $K$ can be written as 
$$4K(p)=-\kappa_s(p)\kappa_c(p)^2$$
by \eqref{eq:Gaussian-curvature} as in the proof of Proposition \ref{thm:K-nu}. 
This completes the proof.
\end{proof}

\section{Signs of cusps of the Gauss map of a cuspidal edge}
We consider the sign of a cusp of the Gauss map of a front with a cuspidal edge. 
First, we recall the cuspidal curvature for a curve with an ordinary cusp singularity. 

Let $S^2$ be the standard unit sphere in the Euclidean $3$-space $\R^3$.
Note that for any $\bm{x}\in S^2$, 
the tangent plane $T_{\bm{x}}S^2$ of $S^2$ at $\bm{x}$ can be regarded as 
$\bm{x}^{\perp}=\{\bm{v}\in\R^3\ |\ \inner{\bm{v}}{\bm{x}}=0\}$ of $\bm{x}\in S^2\subset\R^3$ 
by identifying $T_{\bm{x}}\R^3$ with $\R^3$ and considering $T_{\bm{x}}S^2\subset\R^3$ as a vector subspace. 

Let $c\colon I\to S^2$ be a $C^\infty$ map, where $I\subset\R$ is an open interval with a local coordinate $t$. 
Then we call the curve $c$ a {\it spherical curve}. 
Suppose that a point $t_0\in I$ is a {singular point} of $c$, that is, 
$(dc/dt)(t_0)=\dot{c}(t_0)=\0$ holds. 
It is known that $c$ has an {ordinary cusp} at $t_0$ if and only if 
$$
\det(c,D_t\dot{c},D_tD_t\dot{c})(t_0)\neq0,$$ 
where 
$D$ is the covariant derivative of $S^2$ with $D_t=D_{d/dt}$ (cf. \cite{suy1}).

Let $c\colon I\to S^2$ be a spherical curve with an ordinary cusp at $t_0\in I$. 
Then we set 
\begin{equation}\label{eq:cuspcurvature1}
\mu
=\left.\dfrac{\det(c(t),D_t\dot{c}(t),D_tD_t\dot{c}(t))}{|D_t\dot{c}(t)|^{5/2}}\right|_{t=t_0}.
\end{equation}
This is a geometric invariant called a {\it cuspidal curvature} of $c$ (\cite{suy2,shibaume}). 
The cuspidal curvature does not depend on orientation preserving diffeomorphisms on the source and isometries on the target.
We note that the cuspidal curvature $\mu$ can be defined for curves with ordinary cusps in any Riemannian $2$-manifold (\cite{suy2}). 

Let $c\colon I\to S^2$ be a spherical curve with 
an ordinary cusp at $t_0\in I$. 
Then $t_0$ is a {\it positive cusp} or a {\it zig} (resp. a {\it negative cusp} or a {\it zag}) of $c$ 
if the cuspidal curvature $\mu$ of $c$ is positive (resp. negative) at $t_0$ (cf. \cite{suy,suy2,shibaume}) (see Figure \ref{fig:cusp}).
\begin{figure}[htbp]
  \begin{center}
    \begin{tabular}{c}

      \begin{minipage}{0.33\hsize}
        \begin{center}
          \includegraphics[width=3.5cm]{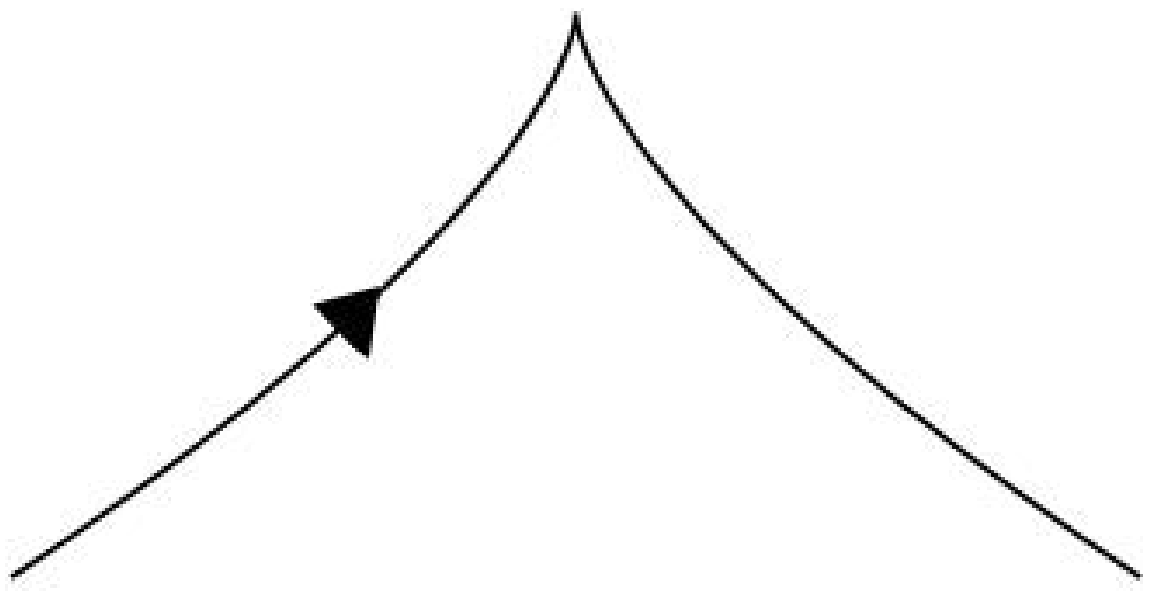}
        \end{center}
      \end{minipage}

      \begin{minipage}{0.33\hsize}
        \begin{center}
          \includegraphics[width=3.5cm]{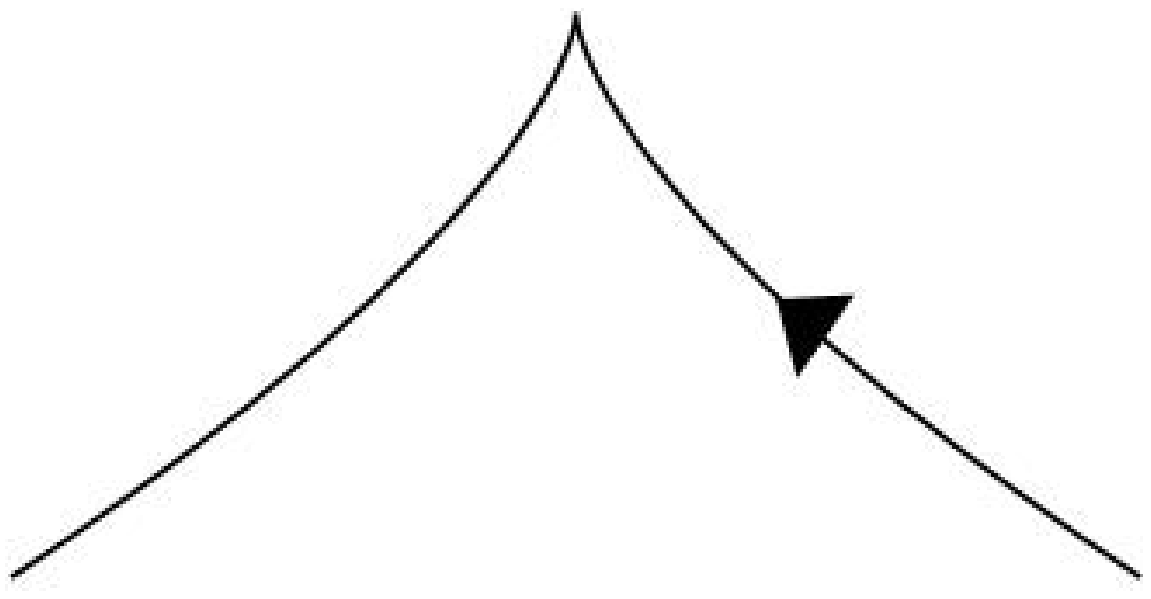}
        \end{center}
      \end{minipage}

    \end{tabular}
    \caption{Left: positive cusp (zig). Right: negative cusp (zag). 
    A curve turns right (resp. left) for a cusp 
    if the cuspidal curvature $\mu$ is positive (resp. negative) at that point.}
    \label{fig:cusp}
  \end{center}
\end{figure}
We remark that global studies of {\it zigzag numbers} are known (see \cite{arnold,fukutaka0,suy,suy1}).

Let $f\colon\Sig\to\R^3$ be a front with a cuspidal edge $p\in\Sig$. 
Assume that the Gauss map $\nu$ of $f$ has a cusp at $p$ and the Gaussian curvature $K$ of $f$ is non-zero bounded on a neighborhood $U$ of $p$. 
Then there exists a parabolic curve $\sigma(t)$ passing through $p=\sigma(0)$.
In this case, the singular locus $\check{\nu}(t):=\nu\circ\sigma(t)(=\nu\circ\gamma(\pm t))$ is a spherical curve with an ordinary cusp singularity at $t=0$. 
{Moreover, we take an orientation of $\gamma(t)$ so that the left-hand side of $\gamma$ is the region of $\lambda>0$ on $U$.}

\begin{dfn}\label{def:zigzag-Gauss}
Under the above setting, 
we call a point $p$ a {\it positive cusp} or a {\it zig} (resp. a {\it negative cusp} or a {\it zag}) of $\check{\nu}=\nu\circ\sigma$ 
if we take $\sigma(t)=\gamma(t)$ and the cuspidal curvature $\mu^\nu$ of the singular locus $\check{\nu}$ of $\nu$ 
is positive (resp. negative) at $p$, 
where $\gamma$ and $\sigma$ are a singular curve and a parabolic curve of $f$ through $p$, respectively. 
\end{dfn}

\begin{lem}\label{lem:orientation}
Under the above situation, 
although we change the Gauss map $\nu$ to $-\nu$ of a front $f$ with cuspidal edge $p$ whose Gaussian curvature is non-zero bounded near $p$, 
the positivity or negativity of a cusp of the singular locus of the Gauss map does not change. 
\end{lem}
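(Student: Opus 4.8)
The plan is to track the two ingredients of Definition \ref{def:zigzag-Gauss} that depend on the choice of unit normal, and to show that each contributes a sign change to the cuspidal curvature $\mu^\nu$, so that the two cancel. Recall that the positivity or negativity of the cusp is governed by the sign of
\[
\mu^\nu=\left.\frac{\det(\check\nu,D_t\dot{\check\nu},D_tD_t\dot{\check\nu})}{|D_t\dot{\check\nu}|^{5/2}}\right|_{t=0},
\]
computed along $\check\nu=\nu\circ\sigma$ with the orientation of $\sigma=\gamma$ fixed by the convention that the region $\lambda>0$ lies to the left of $\gamma$. The substitution $\nu\mapsto-\nu$ affects both the curve $\check\nu$ and this orientation convention, and I would treat the two effects separately.

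First I would analyze the effect on the spherical curve. Writing $c=\check\nu=\nu\circ\sigma$, the substitution $\nu\mapsto-\nu$ replaces $c$ by $\tilde c=-c$, i.e.\ composes with the antipodal map $A(\bm x)=-\bm x$ of $S^2$. Since $A$ is an isometry whose differential, under the identification $T_{\bm x}S^2=\bm x^{\perp}=T_{-\bm x}S^2$, equals $-\mathrm{Id}$, the covariant derivative commutes with $A$ and one obtains $\tilde c=-c$, $D_t\dot{\tilde c}=-D_t\dot c$ and $D_tD_t\dot{\tilde c}=-D_tD_t\dot c$; the first of these can also be checked directly from $D_t\dot c=\ddot c+|\dot c|^2c$. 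Hence the numerator of $\mu^\nu$ is multiplied by $(-1)^3=-1$ while the denominator is unchanged, so this replacement alone reverses the sign of $\mu^\nu$.

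Next I would account for the orientation convention. Because $\lambda=\det(f_u,f_v,\nu)$ is linear in $\nu$, the substitution $\nu\mapsto-\nu$ sends $\lambda$ to $-\lambda$; thus the region previously labelled $\lambda>0$ becomes $\lambda<0$, and to keep it on the left of $\gamma$ the orientation of $\gamma$, hence of $\sigma$, must be reversed, i.e.\ $t\mapsto-t$. It then remains to record how $\mu^\nu$ behaves under this orientation-reversing reparametrization: at the singular point, where $\dot{\check\nu}=\0$, a direct computation shows that $\check\nu$ and $D_t\dot{\check\nu}$ are unchanged while $D_tD_t\dot{\check\nu}$ is replaced by its negative. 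This multiplies the numerator by $-1$ and leaves the denominator fixed, giving a second sign reversal.

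Combining the two contributions, $\mu^\nu$ is multiplied by $(-1)\cdot(-1)=1$ under $\nu\mapsto-\nu$, so its sign---and therefore the positivity or negativity of the cusp in the sense of Definition \ref{def:zigzag-Gauss}---is preserved. The step requiring the most care is the second one: one must notice that the orientation of $\gamma$ in the definition is itself pinned down through $\lambda$, which also flips sign with $\nu$. Overlooking this would leave only the antipodal contribution and lead to the incorrect conclusion that the sign of $\mu^\nu$ reverses.
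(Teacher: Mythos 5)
Your proof is correct and follows essentially the same route as the paper's: both arguments observe that replacing $\nu$ by $-\nu$ flips the sign of $\mu^\nu$ via \eqref{eq:cuspcurvature1}, that $\lambda\mapsto-\lambda$ forces a reversal of the orientation of $\sigma=\gamma$ to maintain the convention in Definition \ref{def:zigzag-Gauss}, and that this reversal produces a second sign flip which cancels the first. The only difference is that you explicitly verify the two sign-change claims (via the antipodal isometry and the behavior of $D_t\dot{\check\nu}$, $D_tD_t\dot{\check\nu}$ under $t\mapsto-t$ at the singular point), which the paper simply asserts from the definition.
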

\begin{proof}
By the definition of the cuspidal curvature \eqref{eq:cuspcurvature1}, if we change $\nu$ to $-\nu$, then the cuspidal curvature changes its sign. 
Moreover, if  we change the orientation of $\sigma(=\gamma)$, then the cuspidal curvature also changes the sign by \eqref{eq:cuspcurvature1}. 
On the other hand, if we change $\nu$ to $-\nu$, $\lambda$ changes to $-\lambda$. 
Thus we need to change the orientation of $\sigma=\gamma$ so that the region of $-\lambda>0$ is the left-hand side of $\gamma$ (cf. Definition \ref{def:zigzag-Gauss}). 
In this case, the positivity or negativity of a cusp does not change by the above discussion. 
\end{proof}

\begin{proof}[Proof of Theorem \ref{cor:sign}]
Let $(U;u,v)$ be a special adapted coordinate system centered at $p$ 
and assume that the Gaussian curvature $K$ is non-zero bounded on $U$. 
{Then it holds that $\lambda=v\det(f_u,h,\nu)>0$ on the left-hand side of the $\gamma(u)=(u,0)$, 
the limiting normal curvature $\kappa_\nu$ vanishes identically along the $u$-axis (cf. Fact \ref{fact:Gauss-kn})
and the $u$-axis is also the set of singular points of $\nu$, that is, $\sigma(u)=(u,0)(=\gamma(u))$ on $U$ (cf. Proposition \ref{thm:K-nu} and \cite[Lemma 3.25]{suy3}), 
where $h\colon U\to\R^3\setminus\{\0\}$ is a $C^\infty$ map satisfying $f_v=vh$.} 
Thus the singular locus $\check{\nu}$ of $\nu$ is given as $\check{\nu}(u)=\nu(u,0)$. 
Moreover, by \eqref{eq:diff-nu} in Lemma \ref{lem:Weingarten} and the assumption, we have 
$$\nu_u=-\kappa_t h,\quad \nu_v=-\dfrac{\kappa_c}{2}h$$
along the $u$-axis. 
Thus $\check{\nu}'(u)=-\kappa_t(u)h(u,0)$ ($'=d/du$). 

To calculate the cuspidal curvature $\mu^\nu$ of $\check{\nu}$, 
we compute the second and the third order differentials of $\check{\nu}$. 
By direct calculations, we see that 
\begin{equation*}
\check{\nu}''=\dfrac{d^2\check{\nu}}{du^2}=-\kappa_t'h-\kappa_th_u,\quad
\check{\nu}'''=\dfrac{d^3\check{\nu}}{du^3}=-\kappa_t''h-2\kappa_t'h_u-\kappa_th_{uu}.
\end{equation*}
Since $\{f_u,h,\check{\nu}\}$ is an orthonormal frame along the $u$-axis, 
$h_u$ can be written as a linear combination of $f_u$, $h$ an $\check{\nu}$. 
We set 
$$h_u(u,0)=X_1(u)f_u(u,0)+X_2(u)h(u,0)+X_3(u)\check{\nu}(u)$$
along the $u$-axis, where $X_i$ $(i=1,2,3)$ are $C^\infty$ functions.
Since $\inner{h}{h}=1$ on the $u$-axis, $X_2=0$. 
Moreover, $X_3=\wtil{M}=\kappa_t$ hold because $\inner{h}{\check{\nu}}=0$ and 
$\inner{h_u}{\check{\nu}}+\inner{h}{\check{\nu}'}=0$. 
We consider $X_1$. 
Since $f_v=vh$, we see that $h_u=f_{uvv}$ on the $u$-axis. 
On the other hand, $\wtil{E}_{vv}=\inner{f_u}{f_u}_{vv}=2\inner{f_u}{f_{uvv}}=2\inner{f_u}{h_u}$ holds 
along the $u$-axis. 
Thus by \eqref{eq:curvature1}, we have $X_1=-\kappa_s$, 
and hence $h_u(u,0)$ is written as 
$$h_u(u,0)=-\kappa_s(u)f_u(u,0)+\kappa_t(u)\check{\nu}(u).$$
Since $\kappa_t(p)=0$ and $\kappa_t'(p)\neq0$ by Fact \ref{fact:singgauss}, we have $\check{\nu}'(p)=\bm{0}$, 
$$D_u\check{\nu}'(p)=-\kappa_t'(p)h(p),\quad 
D_uD_u\check{\nu}'(p)=-\kappa_t''(p)h(p)+2\kappa_t'(p)\kappa_s(p)f_u(p).$$
Therefore it holds that 
$$\det(\check{\nu},D_u\check{\nu}',D_uD_u\check{\nu}')=2\kappa_s(\kappa_t')^2,\quad
|D_u\check{\nu}'|^{5/2}=(\kappa_t')^2\sqrt{|\kappa_t'|}$$
at $p$. 
Thus the cuspidal curvature $\mu^{\nu}$ of $\check{\nu}$ at $p$ is 
$$\mu^\nu=\dfrac{2\kappa_s(p)}{\sqrt{|\kappa_t'(p)|}},$$
and hence we have the assertion. 
If we choose $-\nu$ as the Gauss map of $f$, then we have the same conclusion by Lemma \ref{lem:orientation}. 
\end{proof}


\begin{cor}\label{cor:bdd-zigzag}
Under the same assumptions as in Theorem \ref{cor:sign}, 
if the Gaussian curvature is positive $($\/resp. negative\/$)$ near a cuspidal edge $p$, 
then $p$ is a negative cusp $($\/resp. a positive cusp$\/)$ of the singular locus of the Gauss map.
\end{cor}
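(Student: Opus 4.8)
The plan is to read off the conclusion directly from the sign relations already established, combining the formula for the cuspidal curvature $\mu^\nu$ derived in the proof of Theorem \ref{cor:sign} with the expression \eqref{eq:Gaussian-curvature} for the Gaussian curvature at $p$. No new computation is needed; the entire content is sign bookkeeping among quantities that are all already in hand.

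First I would recall that, in the proof of Theorem \ref{cor:sign}, working in a special adapted coordinate system we obtained
$$\mu^\nu=\dfrac{2\kappa_s(p)}{\sqrt{|\kappa_t'(p)|}}.$$
Since $\kappa_t'(p)\neq0$ (as $\nu$ has a cusp at $p$, cf.\ Fact \ref{fact:singgauss}), the denominator is a positive real number, so $\sgn(\mu^\nu)=\sgn(\kappa_s(p))$. By Definition \ref{def:zigzag-Gauss} this means that, with the chosen orientation $\sigma(t)=\gamma(t)$, the point $p$ is a positive cusp (a zig) of $\check{\nu}$ exactly when $\kappa_s(p)>0$, and a negative cusp (a zag) exactly when $\kappa_s(p)<0$.

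Next I would invoke the relation between $K$ and $\kappa_s$ at $p$. Because $\nu$ has a cusp at $p$, it is a non-degenerate singular point of $\nu$ other than a fold, so Corollary \ref{cor:not-fold} gives $\kappa_t(p)=0$ and $\kappa_s(p)\neq0$. Substituting into \eqref{eq:Gaussian-curvature} yields $4K(p)=-\kappa_s(p)\kappa_c(p)^2$, exactly as in the proof of Theorem \ref{thm:Gauss-ks}. Since all singular points are cuspidal edges, $\kappa_c(p)\neq0$ and hence $\kappa_c(p)^2>0$; combined with the hypothesis that $K$ is non-zero bounded (so that the extended value satisfies $K(p)\neq0$), this forces $\sgn(K(p))=-\sgn(\kappa_s(p))$.

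Finally I would assemble the two sign relations. If $K>0$ on $U$, then $K(p)>0$, so $\kappa_s(p)<0$, whence $\mu^\nu<0$ and $p$ is a negative cusp; symmetrically, if $K<0$ on $U$ then $\kappa_s(p)>0$, so $\mu^\nu>0$ and $p$ is a positive cusp. This is precisely the assertion, and by Lemma \ref{lem:orientation} it is unaffected by replacing $\nu$ with $-\nu$, so no further care about the orientation of the normal is required. I do not anticipate any genuine obstacle here: the only delicate point is keeping track of the three sign factors—$\mu^\nu$ against $\kappa_s(p)$, $K(p)$ against $\kappa_s(p)$, and the strict positivity of both $\kappa_c(p)^2$ and $|K(p)|$—each of which is already guaranteed by the results preceding the statement.
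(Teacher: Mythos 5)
Your proposal is correct and is essentially the paper's own argument: the paper proves this corollary in one line by combining Theorem \ref{thm:Gauss-ks} (which encodes your sign relation $\sgn K(p)=-\sgn\kappa_s(p)$ via \eqref{eq:Gaussian-curvature} and Corollary \ref{cor:not-fold}) with Theorem \ref{cor:sign} (which encodes $\sgn\mu^\nu=\sgn\kappa_s(p)$). You have merely inlined the proofs of those two theorems instead of citing their statements, so the sign bookkeeping is identical.
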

\begin{proof}
By Theorems \ref{thm:Gauss-ks} and \ref{cor:sign}, we have the assertion.
\end{proof}

\begin{ex}\label{ex:bounded}
Let $f^\pm\colon\R^2\to\R^3$ be a $C^\infty$ map given by 
$$f^\pm(u,v)=\left(u,\pm3u^2+\dfrac{v^2}{2},\dfrac{v^3}{3}+u^4\pm u^2v^2\right).$$
These maps have cuspidal edge at the origin and it follows that $S(f^\pm)=\{v=0\}$ and $\eta^\pm=\partial_v$.
The Gauss maps $\nu^\pm$ of $f^\pm$ are 
$$\nu^+(u,v)=\dfrac{\left(8u^3-2uv(v-3),-2u^2-v,1\right)}{\sqrt{1+(v+2u^2)^2+(8u^3-2uv(v-3))^2}},$$
and  
$$\nu^-(u,v)=\dfrac{\left(8u^3-2uv(v-3),2u^2-v,1\right)}{\sqrt{1+(v-2u^2)^2+(8u^3-2uv(v-3))^2}},$$ 
respectively. 
In this case, the signed area density function $\lambda^+$ (resp. $\lambda^-$) for $f^+$ (resp. $f^-$) is given as 
$\lambda^+=v\phi$ (resp. $\lambda^-=v\psi$) for some positive function $\phi$ (resp. $\psi$). 
Thus the left-hand side of $\gamma^\pm(u)=(u,0)$ is the region of $\lambda^\pm>0$ near the origin. 

By a direct computation, we have $\kappa_\nu^\pm(u)\equiv0$, 
$$\kappa_s^\pm(u)=\pm\dfrac{6\sqrt{1+24u^4+64u^6}}{(1+36u^2+16u^6)^{3/2}},
\quad\kappa_t^\pm(u)=\pm\dfrac{4u}{1+4u^2+64u^4}$$
along the $u$-axis.
Thus it follows that $\kappa_t^\pm(0)=0$, $\kappa_s^\pm(0)=\pm6$ and $(\kappa_t^\pm)'(0)=\pm4\neq0$. 
This implies that $\nu_\pm$ has a cusp at $(0,0)$, 
and the Gaussian curvature $K^\pm$ of $f^\pm$ is bounded and $K^+<0$ (resp. $K^- >0$) near $(0,0)$ (cf. Theorem \ref{thm:Gauss-ks}). 
In fact, $K^\pm$ are written as 
\begin{align*}
K^+&=\dfrac{-2 \left(3+8 u^2-v\right)}{\left(1+64 u^6+v^2+4u^4 \left(1+24 v-8 v^2\right)+4 u^2 v \left(1+9 v-6 v^2+v^3\right)\right)^2},\\
K^-&=\dfrac{2\left(3-8 u^2-v\right)}{\left(1+64 u^6+v^2+4u^4 \left(1-24 v+8 v^2\right)+4 u^2 v \left(-1+9 v-6 v^2+v^3\right)\right)^2},
\end{align*}
and hence $K^+<0$ and $K^- >0$ on a sufficiently small neighborhood of the origin. 
 
On the other hand, the singular loci $\check{\nu}^\pm(u)=\nu^\pm(u,0)$ is 
$$\check{\nu}^+(u)=\dfrac{\left(8u^3,-2u^2,1\right)}{\sqrt{1+4u^4+64u^6}},\quad 
\check{\nu}^-(u)=\dfrac{\left(8u^3,2u^2,1\right)}{\sqrt{1+4u^4+64u^6}}.$$
These have ordinary cusps at $u=0$, and the cuspidal curvature $\mu_\pm^{\nu}$ at $u=0$ are 
$$\mu_\pm^{\nu}=\pm6=\dfrac{2\kappa_s^\pm(0)}{\sqrt{|(\kappa_t^\pm)'(0)|}}.$$
Thus $(0,0)$ is a positive (resp. negative) cusp of $\check{\nu}^+$ (resp. $\check{\nu}^-$)
(cf. Corollary \ref{cor:bdd-zigzag}, see Figures \ref{fig:exa1} and \ref{fig:exa2}).
\begin{figure}[htbp]
  \begin{center}
    \begin{tabular}{c}

      \begin{minipage}{0.3\hsize}
        \begin{center}
          \includegraphics[clip, width=3.25cm]{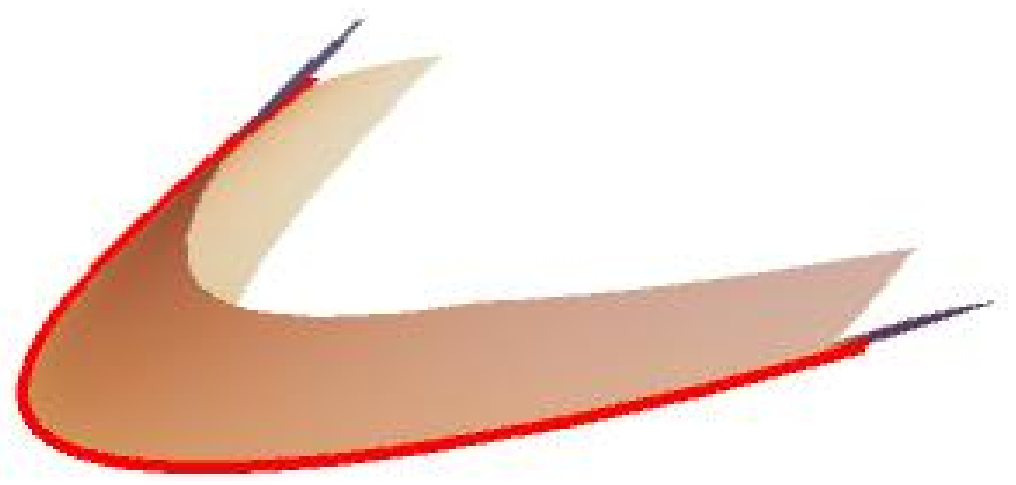}
        \end{center}
      \end{minipage}

      \begin{minipage}{0.3\hsize}
        \begin{center}
          \includegraphics[clip, width=3.25cm]{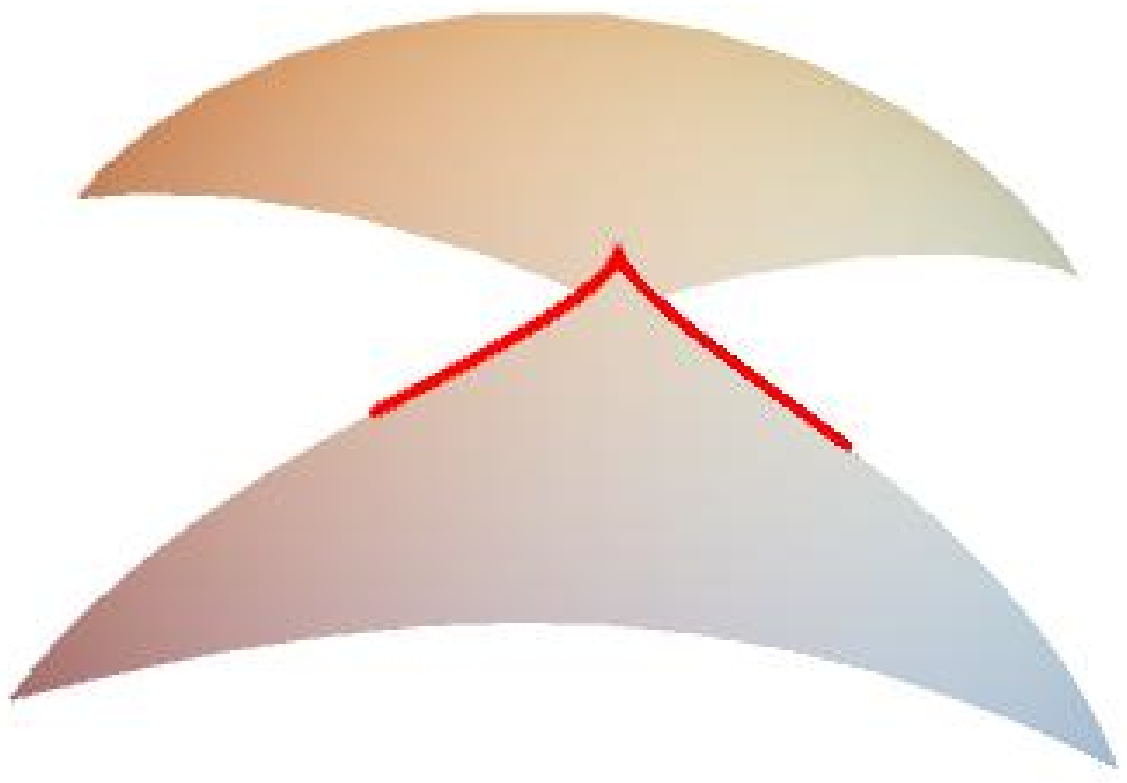}
        \end{center}
      \end{minipage}

      \begin{minipage}{0.3\hsize}
        \begin{center}
          \includegraphics[clip, width=3.5cm]{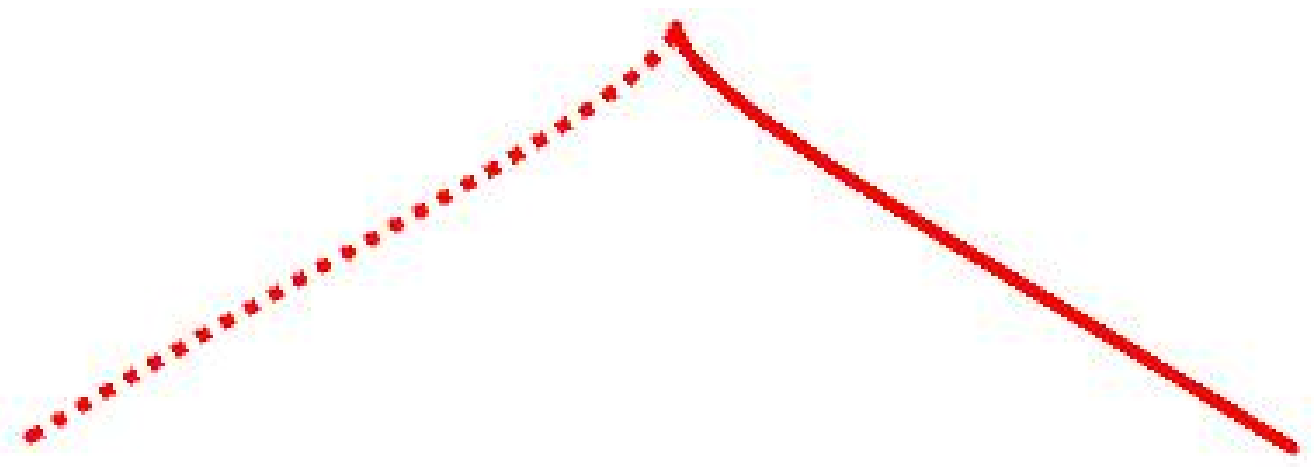}
        \end{center}
      \end{minipage}

    \end{tabular}
    \caption{Left: The image of $f^+$ given in Example \ref{ex:bounded}.
    The red curve is the image of the curve $\gamma(u)=(u,0)$ by $f^+$. 
    Center: The image of the Gauss map $\nu^+$. 
    Right: The singular locus $\check{\nu}^+$. 
    The dashed curve is the case of $u<0$ and thick curve is of $u>0$. 
    This shows that $\check{\nu}^+$ turns right for the cusp.}
    \label{fig:exa1}
  \end{center}
\end{figure}
\begin{figure}[htbp]
  \begin{center}
    \begin{tabular}{c}

      \begin{minipage}{0.3\hsize}
        \begin{center}
          \includegraphics[clip, width=3.25cm]{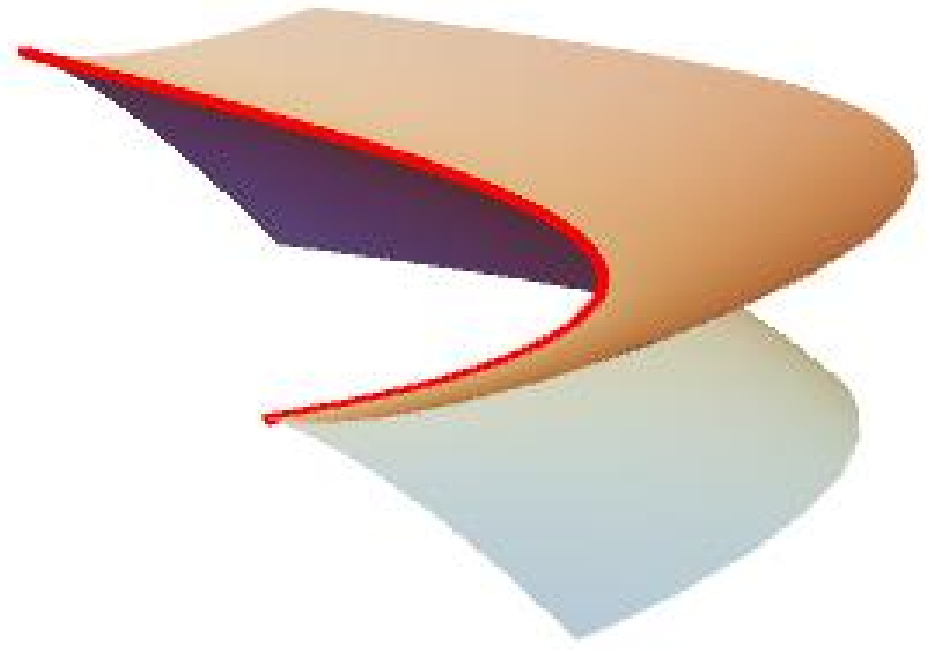}
        \end{center}
      \end{minipage}

      \begin{minipage}{0.3\hsize}
        \begin{center}
          \includegraphics[clip, width=3.25cm]{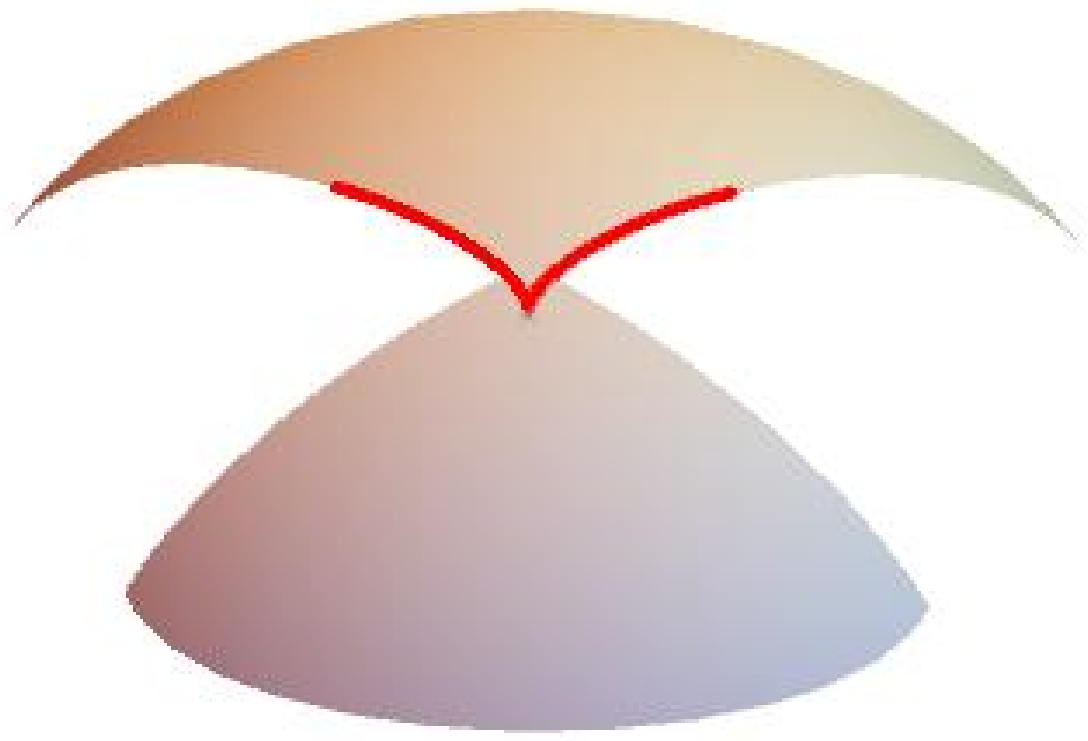}
        \end{center}
      \end{minipage}

      \begin{minipage}{0.3\hsize}
        \begin{center}
          \includegraphics[clip, width=3.5cm]{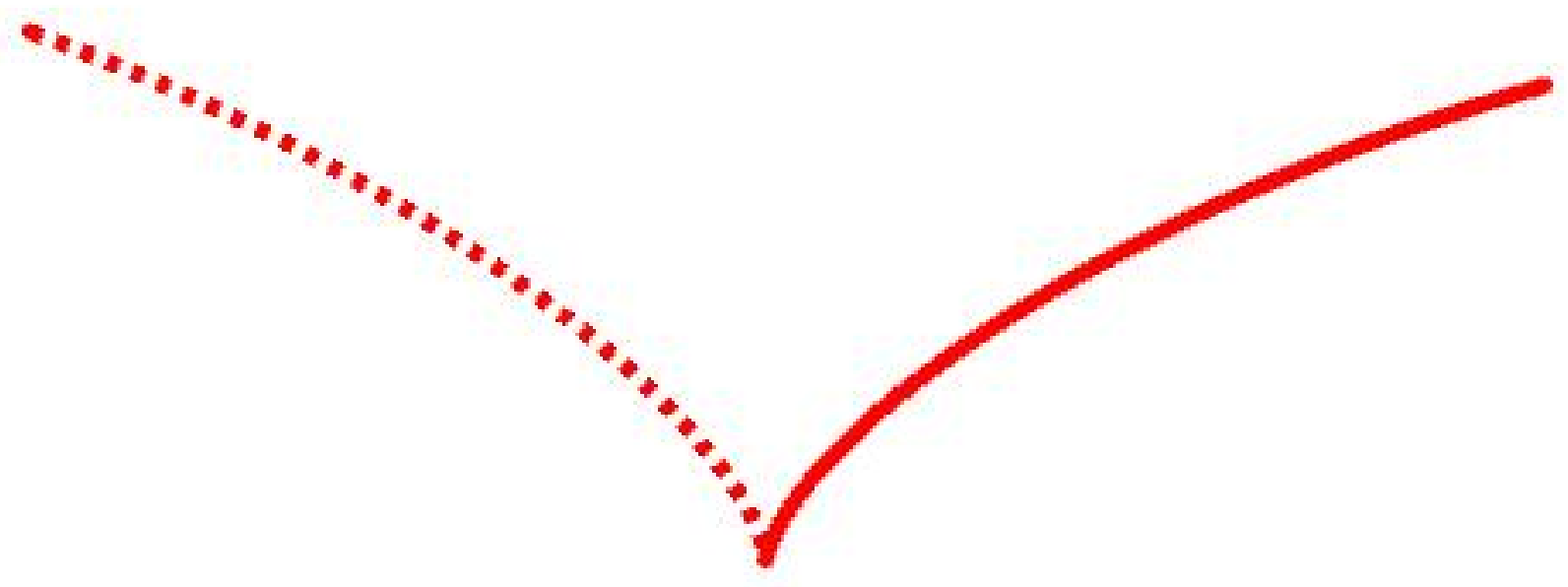}
        \end{center}
      \end{minipage}

    \end{tabular}
    \caption{Left: The image of $f^-$ given in Example \ref{ex:bounded}.
    The red curve is the image of the curve $\gamma(u)=(u,0)$ by $f^-$. 
    Center: The image of the Gauss map $\nu^-$. 
    Right: The singular locus $\check{\nu}^-$. 
    The dashed curve is the case of $u<0$ and thick curve is of $u>0$. 
    This shows that $\check{\nu}^-$ turns left for the cusp.}
    \label{fig:exa2}
  \end{center}
\end{figure}
\end{ex}

\begin{acknowledgements}
The author is grateful to Professor Kentaro Saji for fruitful discussions and constant encouragements, 
and to Professor Osamu Saeki for valuable comments.
\end{acknowledgements}



\end{document}